\newtheorem*{theorem*}{Theorem}
\newtheorem*{lemma*}{Lemma}
\newtheorem{thm}{Theorem}
\newtheorem{lem}[thm]{Lemma}
\theoremstyle{remark}
\newtheorem{rem}[thm]{Remark}
\let\counterwithin\relax
\newcommand\D[1]{\mathrm{d}#1\,}
\newcommand{\R}{\mathbb R}
\newcommand{\N}{\mathbb N}
\newcommand{\cL}{\mathcal L}
\newcommand{\Lis}{\cL\mathrm{is}}
\DeclareMathOperator{\blockdiag}{blockdiag}
\DeclareMathOperator{\spann}{span}
\DeclareMathOperator{\diam}{diam}
\DeclareMathOperator{\supp}{supp}
\newcommand{\eps}{\epsilon}
\newcommand{\kron}{\otimes}
\newcommand{\new}[1]{{#1}}
\newcommand{\identity}{\mathrm{Id}}
\newcommand{\grad}{\nabla}
\newcommand{\logt}{\log^2\!}
\renewcommand{\vec}{\mathbf}
\let\Vec\undefined
\DeclareMathOperator*{\Vec}{Vec}
\newcommand*{\thead}[1]{\multicolumn{1}{r}{#1}}
\def\Vhrulefill{\leavevmode\leaders\hrule height 0.7ex depth \dimexpr0.4pt-0.7ex\hfill\kern0pt}
\newcommand{\be}{\begin{equation}}
\newcommand{\ee}{\end{equation}}
\begin{document}
\mainmatter              
\title{A \new{parallel} algorithm for solving linear parabolic evolution equations}
\titlerunning{A \new{parallel} algorithm for parabolic evolution equations}  
%
\author{Raymond van Veneti\"e \and Jan Westerdiep}
\authorrunning{R.~van Veneti\"e, J.~Westerdiep} 
%
\tocauthor{Raymond van Veneti\"e and Jan Westerdiep}
\institute{Korteweg--de Vries (KdV) Institute for Mathematics, University of Amsterdam,\\
PO Box 94248, 1090 GE Amsterdam, The Netherlands \\
\email{r.vanvenetie@uva.nl}, ~~ \email{j.h.westerdiep@uva.nl}}

\maketitle              

\begin{abstract}
We present an algorithm for the solution of a simultaneous space-time
discretization of linear parabolic evolution equations with a symmetric
differential operator in space. Building on earlier work, we recast this
discretization into a Schur-complement equation whose solution
is a quasi-optimal approximation to the weak solution of the equation at hand.
Choosing a tensor-product discretization, we arrive at a remarkably simple linear system.
Using wavelets in time and standard finite elements in space,
we solve the resulting system in linear complexity on a single processor,
and in \new{poly}logarithmic complexity when parallelized in both space and time.
We complement these theoretical findings with large-scale parallel
computations showing the effectiveness of the method.

\keywords{parabolic PDEs; space-time variational formulations;
optimal preconditioning; parallel algorithms; massively parallel computing.}

~\\
\new{\textbf{Supplementary material:} Source code is available at~\cite{VanVenetie2020b}.}
\end{abstract}

\section{Introduction}
This paper deals with solving parabolic evolution equations in a time-parallel fashion using tensor-product discretizations.
Time-parallel algorithms for solving parabolic evolution equations have \new{become a focal point} following the enormous increase in parallel computing
power.
Spatial parallelism is a ubiquitous component in large-scale computations, but
when spatial parallelism is exhausted, parallelization of the time axis is of interest.

Time-stepping methods first discretize the problem in space, and then solve the arising system
of coupled ODEs sequentially\new{, immediately revealing a primary source of difficulty for time-parallel computation.}

Alternatively, one can solve simultaneously in space \emph{and} time. Originally introduced in \cite{Babuska1989a,Babuska1990a},
these space-time methods are very flexible: some can guarantee quasi-best approximations, \new{meaning that their error is proportional to that of the best approximation from the trial space}
\cite{Andreev2013,Devaud2018,Fuhrer2019,Steinbach2020a}, or drive adaptive routines \new{\cite{Steinbach2018,Rekatsinas2019}}.
Many are
especially well-suited for time-parallel computation \cite{Gander2014,Neumuller2019}. Since the first significant contribution
to time-parallel algorithms~\cite{Nievergelt1964} in 1964, many methods suitable for parallel
computation have surfaced; see the review~\cite{Gander2015}.

\subsubsection*{Parallel complexity.}
The (serial) complexity of an algorithm measures asymptotic runtime on a single processor
in terms of the input size. \emph{Parallel complexity} measures asymptotic runtime
given \emph{sufficiently many} parallel processors having access to a
shared memory, i.e., assuming there are no communication costs.

In the \new{current} context of tensor-product discretizations \new{of parabolic PDEs}, \new{we denote} with $N_t$
and $N_{\vec x}$ the \new{number of unknowns in time and space respectively}.

The parareal method~\cite{Lions2001} aims at time-parallelism by alternating a
serial coarse-grid solve with fine-grid computations in parallel. This
way, \new{each iteration has} a time-parallel complexity of
$\mathcal O( \sqrt {N_t} N_{\vec x})$, \new{and combined}
with parallel multigrid in space, a parallel complexity of
$\mathcal O(\sqrt{N_t} \log N_{\vec x})$.
The popular MGRIT algorithm extends these ideas to multiple levels in time; cf.~\cite{Falgout2014}.

Recently, Neum\"uller and Smears proposed an \new{iterative} algorithm \new{that uses a Fast Fourier Transform in time. Each iteration runs serially in $\mathcal O(N_t \log(N_t) N_{\vec x})$ and parallel in time, in $\mathcal O(\log(N_t) N_{\vec x})$.}
By \new{also} incorporating parallel multigrid in space, its parallel
runtime may be reduced to $\mathcal O(\log N_t + \log N_{\vec x})$.

\subsubsection*{Our contribution.}
In this paper, we study a variational formulation introduced in~\cite{Stevenson2020} which was
based on work by Andreev~\cite{Andreev2013,Andreev2016}.
\new{Recently in \cite{Stevenson2020b,VanVenetie2020}, we studied this formulation in the context of space-time
adaptivity and its efficient implementation in serial and on shared-memory parallel computers.}
The current paper instead focuses on its massively parallel implementation and time-parallel performance.

Our method has remarkable similarities with the approach of~\cite{Neumuller2019},
\new{and the most essential difference is the substitution of their Fast Fourier Transform by our Fast Wavelet Transform.}
The strengths of both methods include a solid inf-sup theory that \new{enables}
quasi-optimal approximate solutions from the trial space, ease of implementation,
and excellent parallel performance in practice.

Our method has another strength: based on a wavelet transform, \new{for fixed algebraic tolerance} it runs
serially in linear complexity.
Parallel in time, it runs in complexity $\mathcal O(\log(N_t) N_{\vec x})$;
parallel in \emph{space and time}, \new{in}
$\mathcal O(\log (N_t N_{\vec x}))$. \new{Moreover, when solving to an algebraic error proportional to the discretization error, incorporating a \emph{nested iteration} (cf.~\cite[Ch.~5]{Hackbusch1985}) results in complexities $\mathcal O(N_t N_{\vec x})$, $\mathcal O(\log (N_t) N_{\vec x})$, and $\mathcal O(\logt (N_t N_{\vec x}))$ respectively. This is on par with best-known results on parallel complexity for elliptic problems; see also~\cite{Brandt1981}.}

\subsubsection*{Organization of this paper.}
In \S\ref{sec:problem}, we formally introduce the problem, derive a saddle-point formulation,
and provide sufficient conditions for quasi-optimality of discrete solutions.
In \S\ref{sec:solving}, we detail on the efficient computation of these discrete solutions.
In \S\ref{sec:concrete} we take a concrete example---the reaction-diffusion equation---and
analyze the serial and parallel complexity of our algorithm.
In \S\ref{sec:numres}, we test these theoretical findings in practice.
We conclude in \S\ref{sec:conclusion}.

\subsubsection*{Notations.}
For normed linear spaces $U$ and $V$, in this paper for convenience over $\R$, $\cL(U,V)$ will denote the space of bounded linear mappings $U \rightarrow V$ endowed with the operator norm $\|\cdot\|_{\cL(U,V)}$. The subset of invertible operators in $\cL(U,V)$  with inverses in $\cL(V,U)$ will be denoted as $\Lis(U,V)$.

Given a finite-dimensional subspace $U^\delta$ of a normed linear space $U$, we denote the trivial embedding $U^\delta \to U$ by $E_U^\delta$.
For a basis $\Phi^\delta$---viewed formally as a column vector---of $U^\delta$, we define the \emph{synthesis operator} as
\[
  \mathcal F_{\Phi^\delta}: \R^{\dim U^\delta} \to U^\delta: {\bf c} \mapsto {\bf c}^\top \Phi^\delta =: \sum_{\phi \in \new{\Phi^\delta}} c_\phi \phi.
\vspace{-1em}
\]
Equip $\R^{\dim U^\delta}$ with the Euclidean inner product and identify $(\R^{\dim U^\delta})'$ with $\R^{\dim U^\delta}$ using the corresponding Riesz map. We find the adjoint of $\mathcal F_{\Phi^\delta}$, the \emph{analysis operator}, to satisfy
\[
({\mathcal F_{\Phi^\delta}})': (U^\delta)' \to \R^{\dim U^\delta} : f \mapsto f(\Phi^\delta) := [f(\phi)]_{\phi \in \new{\Phi^\delta}}.
\]

For quantities $f$ and $g$, by $f \lesssim g$, we mean that $f \leq C \cdot g$ with a constant that does not depend on parameters that $f$ and $g$ may depend on. By $f \eqsim g$, we mean that $f \lesssim g$ and $g \lesssim f$.
For matrices ${\bf A}$ and ${\bf B} \in \R^{N \times N}$, by ${\bf A} \eqsim {\bf B}$ we will denote \emph{spectral equivalence}, i.e.~${\bf x}^\top {\bf A} {\bf x} \eqsim {\bf x}^\top {\bf B} {\bf x}$ for all ${\bf x} \in \R^N$.

\section{Quasi-optimal approximations to the parabolic problem}\label{sec:problem}
Let $V,H$ be separable Hilbert spaces of functions on some spatial domain
such that $V$ is continuously embedded in $H$, i.e.~$V \hookrightarrow H$, with dense compact embedding.
Identifying $H$ with its dual yields the Gelfand triple
$V \hookrightarrow H \simeq H' \hookrightarrow V'$.

For a.e.
\[ t \in I:=(0,T), \]
let $a(t;\cdot,\cdot)$ denote a bilinear form on $V \times V$ so that for
any $\eta,\zeta \in V$, $t \mapsto a(t;\eta,\zeta)$ is measurable on $I$,
and such that for a.e.~$t\in I$,
\begin{alignat*}{3}
|a(t;\eta,\zeta)|
& \lesssim  \|\eta\|_{V} \|\zeta\|_{V} \quad &&(\eta,\zeta \in V) \quad &&\text{({\em boundedness})},
\nonumber \\
 a(t;\eta,\eta) &\gtrsim \|\eta\|_{V}^2 \quad
&&(\eta \in {V}) \quad &&\text{({\em coercivity})}.
\end{alignat*}

With $(A(t) \cdot)(\cdot) := a(t; \cdot, \cdot) \in \Lis({V},V')$, given a forcing function $g$ and initial value $u_0$, we want to solve the {\em parabolic initial value problem} of
\begin{equation} \label{11}
\text{finding $u: I \to V$ such that} \quad
\left\{
\begin{array}{rl}
\frac{\D u}{\D t}(t) +A(t) u(t)& = g(t) \quad(t \in I),\\
u(0) & = u_0.
\end{array}
\right.
\end{equation}

\subsection{An equivalent self-adjoint saddle-point system}
In a simultaneous space-time variational formulation, the parabolic problem reads as finding $u$ from a suitable space of functions of time and space s.t.
\[
(Bw)(v):=\int_I
\langle { \textstyle \frac{\D w}{\D t}}(t), v(t)\rangle_H +
a(t;w(t),v(t)) \D t = \int_I
\langle g(t), v(t)\rangle_H =:g(v)
\]
for all $v$ from another suitable space of functions of time and space.
One possibility to enforce the initial condition is by testing against additional test functions.
\begin{thm}[{\cite{Schwab2009}}] \label{thm0} With $X:=L_2(I;{V}) \cap H^1(I;V')$, $Y:=L_2(I;{V})$, we have
$$
\left[\begin{array}{@{}c@{}} B \\ \gamma_0\end{array} \right]\in \Lis(X,Y' \times H),
$$
where for $t \in \bar{I}$, $\gamma_t\colon u \mapsto u(t,\cdot)$ denotes the trace map.
In other words,
\be \label{x12}
\text{finding $u \in X$ s.t.} \quad (Bu, \gamma_0 u) = (g, u_0) \quad \text{given} \quad (g, u_0) \in Y' \times H
\ee
is a well-posed simultaneous space-time variational formulation of \eqref{11}.
\end{thm}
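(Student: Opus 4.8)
The plan is to apply the generalized Lax--Milgram (Banach--Ne\v{c}as--Babu\v{s}ka) theorem to the combined bilinear form
\[
b\bigl(w,(v,z)\bigr) := (Bw)(v) + \langle \gamma_0 w, z\rangle_H, \qquad w \in X,\ (v,z) \in Y \times H,
\]
whose associated operator is exactly $\left[\begin{smallmatrix} B\\ \gamma_0\end{smallmatrix}\right]$ under the identification $(Y \times H)' \simeq Y' \times H$. Membership in $\Lis(X, Y'\times H)$ is then equivalent to three properties: boundedness of $b$, a uniform inf-sup lower bound $\inf_{0\neq w}\sup_{0\neq(v,z)} b(w,(v,z))/(\|w\|_X\|(v,z)\|_{Y\times H}) \gtrsim 1$, and nondegeneracy in the second argument. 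Throughout I would use the (nontrivial but standard) continuous embedding $X \hookrightarrow C(\bar I; H)$, which makes the traces $\gamma_t$ well defined and bounded into $H$ and validates the integration-by-parts identity $\int_I \langle \partial_t w(t), w(t)\rangle\,\D t = \tfrac12\bigl(\|\gamma_T w\|_H^2 - \|\gamma_0 w\|_H^2\bigr)$.

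Boundedness is the routine part: Cauchy--Schwarz together with the boundedness of $a(t;\cdot,\cdot)$ gives $|(Bw)(v)| \lesssim \|w\|_X \|v\|_Y$, while the embedding yields $\|\gamma_0 w\|_H \lesssim \|w\|_X$; hence $|b(w,(v,z))| \lesssim \|w\|_X\|(v,z)\|_{Y\times H}$.

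The heart of the argument is the inf-sup bound, for which I would construct an explicit supremizer. Coercivity and boundedness of $a(t;\cdot,\cdot)$ make each $A(t) \in \Lis(V,V')$ uniformly in $t$, so $A(t)^{-1} \in \Lis(V',V)$ uniformly; for $w \in X$ set $v_w := A(\cdot)^{-1}\partial_t w$, which lies in $Y$ with $\|v_w\|_Y \eqsim \|\partial_t w\|_{L_2(I;V')}$. Testing with $v = w$ and using coercivity plus the integration-by-parts identity controls $\|w\|_{L_2(I;V)}$ and the traces, since
\[
(Bw)(w) \gtrsim \|w\|_{L_2(I;V)}^2 + \tfrac12\|\gamma_T w\|_H^2 - \tfrac12\|\gamma_0 w\|_H^2,
\]
whereas testing with $v = v_w$ recovers the missing derivative norm, because $\langle \partial_t w, A^{-1}\partial_t w\rangle = a(t;A^{-1}\partial_t w, A^{-1}\partial_t w) \gtrsim \|\partial_t w\|_{V'}^2$, up to a cross term bounded by $\|w\|_{L_2(I;V)}\|\partial_t w\|_{L_2(I;V')}$. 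Choosing the test pair $(v,z) = (w + \theta v_w,\ \gamma_0 w)$ for a sufficiently small $\theta > 0$ and absorbing the cross term by Young's inequality yields $b(w,(v,z)) \gtrsim \|w\|_X^2$ while $\|(v,z)\|_{Y\times H} \lesssim \|w\|_X$, which is the desired inf-sup bound.

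Finally I would establish nondegeneracy: if $(v,z) \in Y \times H$ satisfies $b(w,(v,z)) = 0$ for all $w \in X$, then $(v,z) = 0$. This is precisely the weak form of the adjoint, backward-in-time parabolic problem with a final-time condition encoded by $z$; after the substitution $t \mapsto T - t$ it has exactly the structure treated above, so the same supremizer argument gives its injectivity. I expect the inf-sup step to be the main obstacle: the construction of $v_w$ via the uniform invertibility of $A(t)$, and the careful balancing of the parameter $\theta$ against the cross term, is where all the quantitative work sits, while the reliance on $X \hookrightarrow C(\bar I;H)$ for the trace and the integration-by-parts identity is the subtle structural ingredient underpinning the whole scheme.
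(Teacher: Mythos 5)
The paper does not prove this theorem at all --- it is quoted verbatim from the cited reference, so there is no in-text argument to compare against. Your proposal is a correct, self-contained proof and is essentially the standard argument from that literature: verify the Banach--Ne\v{c}as--Babu\v{s}ka conditions for the combined form $b(w,(v,z))=(Bw)(v)+\langle\gamma_0 w,z\rangle_H$, using $X\hookrightarrow C(\bar I;H)$ for the traces and the integration-by-parts identity, and build the supremizer $(w+\theta A(\cdot)^{-1}\partial_t w,\ \gamma_0 w)$ so that the $+\|\gamma_0 w\|_H^2$ term absorbs the $-\tfrac12\|\gamma_0 w\|_H^2$ produced by integration by parts while Young's inequality absorbs the cross term for small $\theta$. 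Two small points deserve care. First, note that your argument nowhere needs symmetry of $a(t;\cdot,\cdot)$ (the identity $\langle f,A^{-1}f\rangle=a(t;A^{-1}f,A^{-1}f)\gtrsim\|f\|_{V'}^2$ holds for any bounded coercive form), which is consistent with the theorem being stated before the paper's symmetry assumption is introduced. Second, the final step is glibber than it should be: the adjoint nondegeneracy concerns a pair $(v,z)\in Y\times H$, and $Y\times H$ does not have the same structure as $X$, so you cannot literally rerun the supremizer argument after $t\mapsto T-t$. The standard repair is a bootstrap: testing with $w\in C_0^\infty(I)\otimes V$ shows $\partial_t v=A'v\in L_2(I;V')$ distributionally, hence $v\in X$; then integration by parts for general $w$ forces $\gamma_T v=0$ and $z=\gamma_0 v$, and testing the backward equation with $v$ itself gives $v=0$, hence $z=0$. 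With that step filled in, the proof is complete.
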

We define $A \in \Lis(Y, Y')$ and $\partial_t \in \Lis(X, Y')$ as
\[
    (Au)(v) := \int_I a(t; u(t), v(t)) \D t, \quad \text{and} \quad \partial_t := B - A.
\]
Following~\cite{Stevenson2020}, we assume that $A$ is \emph{symmetric}.
We can reformulate~\eqref{x12} as the self-adjoint saddle point problem
\begin{equation} \label{m0}
\text{finding $(v, \sigma, u) \in Y \times H \times X$ s.t.} \quad
\left[\begin{array}{@{}ccc@{}} A & 0 & B\\ 0 & \identity & \gamma_0\\ B' & \gamma_0' & 0\end{array}\right]
\left[\begin{array}{@{}c@{}} v \\ \sigma \\ u \end{array}\right]=
\left[\begin{array}{@{}c@{}} g \\ u_0 \\ 0 \end{array}\right].
\end{equation}
By taking a Schur complement w.r.t.~the $H$-block, we can reformulate this as
\begin{equation}\label{m1}
\text{finding $(v, u) \in Y \times X$ s.t.} \quad
\left[\begin{array}{@{}cc@{}} A & B\\ B' & -\gamma_0' \gamma_0\end{array}\right]
\left[\begin{array}{@{}c@{}} v  \\ u \end{array}\right]=
\left[\begin{array}{@{}c@{}} g \\ -\gamma_0' u_0 \end{array}\right].
\end{equation}

We equip $Y$ and $X$ with \emph{`energy'-norms}
$$
\|\cdot\|_Y^2:=(A \cdot)(\cdot),\quad
\|\cdot\|_X^2:=\|\partial_t \cdot\|_{Y'}^2 + \|\cdot\|_Y^2 + \|\gamma_T \cdot\|_H^2,
$$
which are equivalent to the canonical norms on $Y$ and $X$.

\subsection{Uniformly quasi-optimal Galerkin discretizations}

Our numerical approximations will be based on the saddle-point formulation~\eqref{m1}.
Let $(Y^\delta, X^\delta)_{\delta \in \Delta}$ be a collection of closed subspaces of $Y \times X$ satisfying
\be \label{infsup1}
    X^\delta \subset Y^\delta, \quad \partial_t X^\delta \subset Y^\delta \quad (\delta \in \Delta),
\ee
and
\be \label{infsup2}
    1 \geq \gamma_\Delta := \inf_{\delta \in \Delta} \inf_{0 \not= u \in X^\delta} \sup_{0 \not= v \in Y^\delta} \frac{(\partial_t u)(v)}{\|\partial_t u\|_{Y'} \|v\|_Y} > 0.
\ee
\begin{rem}
In \cite[\S 4]{Stevenson2020}, these conditions were verified for $X^\delta$ and $Y^\delta$ being
tensor-products of \new{(locally refined) finite element spaces in time and space}.
In~\cite{Stevenson2020b}, we relax these conditions to $X^\delta_t$ and $Y^\delta$ being
\emph{adaptive sparse grids}, allowing adaptive refinement locally in space \emph{and} time
simultaneously.
\end{rem}

For $\delta \in \Delta$, let
$(v^\delta, \overline u^\delta) \in Y^\delta \times X^\delta$ solve the Galerkin discretization of~\eqref{m1}:
\be \label{m8}
\left[\begin{array}{@{}ccc@{}}{E_Y^\delta}' A E_Y^\delta& {E_Y^\delta}' B E^\delta_X\\ {E^\delta_X}' B' E_Y^\delta& -{E^\delta_X}' \gamma_0' \gamma_0 E^\delta_X \end{array}\right]
\left[\begin{array}{@{}c@{}} v^\delta \\ \overline u^\delta \end{array}\right]=
\left[\begin{array}{@{}c@{}} {E^\delta_Y}' g \\ -{E^\delta_X}' \gamma_0' u_0 \end{array}\right].
\ee
The solution $(v^\delta, \overline u^\delta)$ of \eqref{m8} exists uniquely, and exhibits \emph{uniform quasi-optimality} in that $\|u - \overline u^\delta\|_X \leq \gamma_\Delta^{-1} \inf_{u_\delta \in X^\delta} \|u - u_\delta\|_X$ \new{for all $\delta \in \Delta$}.

Instead of solving a matrix representation of \eqref{m8} using e.g.~preconditioned MINRES,
we will opt for a computationally more attractive method. By taking the Schur complement
w.r.t.~the $Y^\delta$-block in~\eqref{m8}, and replacing $({E_Y^\delta}' A E_Y^\delta)^{-1}$
in the resulting formulation by a \emph{preconditioner} $K_Y^\delta$ that can be applied
cheaply, we arrive at the \emph{Schur complement formulation} of finding $u^\delta \in X^\delta$ s.t.
\be \label{m9}
   \underbrace{{E^\delta_X}'(B' E^\delta_Y K_Y^\delta {E^\delta_Y}' B+\gamma_0'\gamma_0)E^\delta_X}_{=: S^\delta} u^\delta = \underbrace{{E^\delta_X}' (B' E^\delta_Y K_Y^\delta {E^\delta_Y}' g+\gamma_0' u_0)}_{=: f^\delta}.
\ee
The resulting operator $S^\delta \in \Lis(X^\delta, {X^\delta}')$ is self-adjoint and elliptic.
Given a self-adjoint operator $K_Y^\delta \in \cL({Y^\delta}',Y^\delta)$ satisfying, for some $\kappa_\Delta \geq 1$,
\be \label{eqn:spectral}
    \frac{\big(({K_Y^\delta})^{-1} v\big)(v)}{(A v\big)(v)} \in [\kappa_\Delta^{-1}, \kappa_\Delta] \quad (\delta \in \Delta, ~~ v \in Y^\delta),
\ee
the solution $u^\delta$ of \eqref{m9} exists uniquely as well. In fact, the following holds.
\begin{thm}[{\cite[Rem.~3.8]{Stevenson2020}}]\label{thm:quasiopt}
Take $(Y^\delta \times X^\delta)_{\delta \in \Delta}$ satisfying~\eqref{infsup1}--\eqref{infsup2}, and
$K_Y^\delta$ satisfying~\eqref{eqn:spectral}. Solutions $u^\delta \in X^\delta$ of~\eqref{m9} \new{are uniformly quasi-optimal, i.e.}
\[
    \|u - u^\delta\|_X \leq \frac{\kappa_\Delta}{\gamma_{\Delta}} \inf_{u_{\delta} \in X^\delta} \|u - u_\delta\|_X \quad \new{(\delta \in \Delta)}.
\]
\end{thm}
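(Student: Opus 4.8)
The plan is to realize $u^\delta$ as the image of the exact solution $u$ under a bounded linear \emph{projection} $P^\delta$ onto $X^\delta$, bound $\|P^\delta\|_{\cL(X,X)}$ by $\kappa_\Delta/\gamma_\Delta$, and convert this into quasi-optimality via the identity $\|\identity-P^\delta\|_{\cL(X,X)}=\|P^\delta\|_{\cL(X,X)}$ valid for nontrivial projections on a Hilbert space. Suppressing the embeddings $E_X^\delta,E_Y^\delta$, write $S^\delta$ as the symmetric form $s^\delta(w,w'):=(K_Y^\delta\, Bw|_{Y^\delta})(Bw'|_{Y^\delta})+\langle\gamma_0 w,\gamma_0 w'\rangle_H$ on $X^\delta\times X^\delta$, where $Bw|_{Y^\delta}\in (Y^\delta)'$ is the restriction $v\mapsto(Bw)(v)$; extend it to $\hat s^\delta$ on $X\times X^\delta$ by the same formula with $w\in X$. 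Since the solution of Thm~\ref{thm0} satisfies $Bu=g$ and $\gamma_0u=u_0$, self-adjointness of $K_Y^\delta$ gives $f^\delta=\hat s^\delta(u,\cdot)$, so $u^\delta$ solves $s^\delta(u^\delta,w)=\hat s^\delta(u,w)$ for all $w\in X^\delta$. Defining $P^\delta v\in X^\delta$ by $s^\delta(P^\delta v,w)=\hat s^\delta(v,w)$ (well posed as $s^\delta$ is SPD on $X^\delta$) yields a bounded linear map fixing $X^\delta$; hence $P^\delta$ is a projection onto $X^\delta$ and $u^\delta=P^\delta u$.

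The core is a two-sided comparison of $\|\cdot\|_{S^\delta}^2:=s^\delta(\cdot,\cdot)$ with $\|\cdot\|_X$, resting on the \emph{exact} identity $\|w\|_X^2=\|Bw\|_{Y'}^2+\|\gamma_0w\|_H^2$ for every $w\in X$. I would obtain it by expanding $Bw=\partial_tw+Aw$ in the $A$-induced inner product on $Y'$: using $\|Aw\|_{Y'}=\|w\|_Y$ (as $A$ is the Riesz isomorphism for the energy norm) and the integration-by-parts relation $(\partial_tw)(w)=\tfrac12(\|\gamma_Tw\|_H^2-\|\gamma_0w\|_H^2)$, the right-hand side collapses to $\|\partial_tw\|_{Y'}^2+\|w\|_Y^2+\|\gamma_Tw\|_H^2$. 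For the upper bound I then convert~\eqref{eqn:spectral} into $K_Y^\delta\eqsim ({E_Y^\delta}'AE_Y^\delta)^{-1}$ (inversion preserves the constant $\kappa_\Delta$), bound $(K_Y^\delta Bw|_{Y^\delta})(Bw|_{Y^\delta})\le\kappa_\Delta\sup_{v\in Y^\delta}(Bw)(v)^2/\|v\|_Y^2$, and dominate this restricted supremum by the full $\|Bw\|_{Y'}^2$; with $\kappa_\Delta\ge1$ this gives $\hat s^\delta(w,w)\le\kappa_\Delta\|w\|_X^2$ for all $w\in X$.

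The hard part will be the matching lower bound $s^\delta(w,w)\ge(\gamma_\Delta^2/\kappa_\Delta)\|w\|_X^2$ on $X^\delta$. After peeling off $\kappa_\Delta^{-1}$ via~\eqref{eqn:spectral}, it reduces to the \emph{discrete inf-sup} statement $\sup_{v\in Y^\delta}(Bw)(v)^2/\|v\|_Y^2+\|\gamma_0w\|_H^2\ge\gamma_\Delta^2\|w\|_X^2$ for $w\in X^\delta$---precisely the well-posedness underlying the quasi-optimality of the Galerkin solution of~\eqref{m8} quoted above, and the discrete counterpart of the isomorphism of Thm~\ref{thm0}. I would prove it by testing $(Bw)(\cdot)$ against a suitable combination of two directions in $Y^\delta$: the trial function $w$ itself, which lies in $Y^\delta$ by~\eqref{infsup1} and contributes $(Bw)(w)=\|w\|_Y^2+\tfrac12(\|\gamma_Tw\|_H^2-\|\gamma_0w\|_H^2)$, and a near-supremizer of $\partial_tw$ in $Y^\delta$, whose $\gamma_\Delta$-efficiency is exactly what~\eqref{infsup2} together with $\partial_tX^\delta\subset Y^\delta$ guarantees. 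Balancing the two directions recovers $\|\partial_tw\|_{Y'}$ and $\|\gamma_Tw\|_H$ up to the factor $\gamma_\Delta$; making this weighting optimal and tracking the trace terms is the only genuinely delicate computation.

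With both bounds in place the conclusion is short. By Cauchy--Schwarz for the positive semidefinite form $\hat s^\delta$ and $s^\delta(u^\delta,u^\delta)=\hat s^\delta(u,u^\delta)$, one gets $\|u^\delta\|_{S^\delta}^2\le\sqrt{\hat s^\delta(u,u)}\,\|u^\delta\|_{S^\delta}$, hence $\|u^\delta\|_{S^\delta}\le\sqrt{\kappa_\Delta}\,\|u\|_X$; coercivity then gives $(\gamma_\Delta/\sqrt{\kappa_\Delta})\|u^\delta\|_X\le\|u^\delta\|_{S^\delta}$, so $\|P^\delta\|_{\cL(X,X)}\le\kappa_\Delta/\gamma_\Delta$. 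Assuming $X^\delta\subsetneq X$ (else $u^\delta=u$ trivially), $P^\delta$ is a nontrivial projection on the Hilbert space $X$, so $\|\identity-P^\delta\|_{\cL(X,X)}=\|P^\delta\|_{\cL(X,X)}$. Finally, for any $u_\delta\in X^\delta$ we have $(\identity-P^\delta)u_\delta=0$, whence $\|u-u^\delta\|_X=\|(\identity-P^\delta)(u-u_\delta)\|_X\le\|P^\delta\|_{\cL(X,X)}\|u-u_\delta\|_X$, and taking the infimum over $u_\delta$ yields $\|u-u^\delta\|_X\le(\kappa_\Delta/\gamma_\Delta)\inf_{u_\delta\in X^\delta}\|u-u_\delta\|_X$.
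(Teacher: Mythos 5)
Your proof is correct, and since the paper imports this theorem by citation from \cite[Rem.~3.8]{Stevenson2020} without reproducing a proof, there is nothing in the paper itself to diverge from; your argument is essentially the one in the cited reference (norm identity $\|w\|_X^2=\|Bw\|_{Y'}^2+\|\gamma_0 w\|_H^2$, two-sided spectral bounds on $s^\delta$ with constants $\kappa_\Delta$ and $\gamma_\Delta^2/\kappa_\Delta$, then the projection identity $\|\identity-P^\delta\|=\|P^\delta\|$ to avoid the lossy C\'ea constant). One small remark: the step you flag as ``genuinely delicate'' is in fact an exact computation --- writing $(\partial_t w)(\cdot)|_{Y^\delta}=\langle v_0,\cdot\rangle_Y$ for the Riesz representer $v_0\in Y^\delta$ gives $\sup_{0\neq v\in Y^\delta}\frac{((Bw)(v))^2}{\|v\|_Y^2}=\|v_0+w\|_Y^2=\|\partial_t w\|_{(Y^\delta)'}^2+\|w\|_Y^2+\|\gamma_T w\|_H^2-\|\gamma_0 w\|_H^2$, after which \eqref{infsup2} and $\gamma_\Delta\le 1$ finish the lower bound with no balancing needed.
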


\section{Solving efficiently on tensor-product discretizations}\label{sec:solving}
From now on, we assume that $X^\delta := X^\delta_t \kron X^\delta_{\vec x}$ and $Y^\delta := Y^\delta_t \kron Y^\delta_{\vec x}$ are \emph{tensor-products}, and for ease of presentation, we assume that the spatial discretizations on $X^\delta$ and $Y^\delta$ coincide, i.e.~$X^\delta_{\vec x} = Y^\delta_{\vec x}$, reducing~\eqref{infsup1} to $X^\delta_t \subset Y^\delta_t$ and $\tfrac{\D}{\D t} X^\delta_t \subset Y^\delta_t$.

We equip
$X^\delta_t$ with a basis $\Phi^\delta_t$, $X^\delta_{\vec x}$ with $\Phi_{\vec x}^\delta$, and $Y_t^\delta$ with $\Xi^\delta$.

\subsection{Construction of $K_Y^\delta$}
\label{sec:DY-precond}
Define ${\bf O} := \langle \Xi^\new{\delta}, \Xi^\new{\delta} \rangle_{L_2(I)}$ and ${\bf A}_{\vec x} := \langle \Phi_{\vec x}^\delta, \Phi_{\vec x}^\delta \rangle_V$.
Given ${\bf K}_{\vec x} \eqsim {\bf A}_{\vec x}^{-1}$
uniformly in $\delta \in \Delta$, define
\[
  {\bf K}_Y := {\bf O}^{-1} \kron {\bf K}_{\vec x}.
\]
Then, the preconditioner $K_Y^\delta := \mathcal F_{\Xi^\delta \kron \Phi_{\vec x}^\delta} {\bf K}_Y (\mathcal F_{\Xi^\delta \kron \Phi_{\vec x}^\delta})' \in \cL({Y^\delta}', Y^\delta)$ satisfies~\eqref{eqn:spectral}\new{; cf.~\cite[\S5.6.1]{Stevenson2020b}}.

When $\Xi^\delta$ is orthogonal, ${\bf O}$ is diagonal and can be inverted exactly. For standard finite element bases $\Phi^\delta_{\vec x}$,  suitable $\mathbf K_{\vec x}$ that can be applied efficiently (at cost linear in the discretization size) are provided by symmetric multigrid methods.

\subsection{Preconditioning the Schur complement formulation} \label{sec:precond}
We will solve a matrix representation of~\eqref{m9} with an iterative solver, thus requiring a preconditioner.
Inspired by the constructions of \cite{Andreev2016,Neumuller2019}, we build an \emph{optimal} self-adjoint coercive preconditioner $K^\delta_X \in \cL( {X^\delta}', X^\delta)$ as a wavelet-in-time block-diagonal matrix with multigrid-in-space blocks.

Let $U$ be a separable Hilbert space of functions over some domain.
A given collection $\Psi = \{\psi_\lambda\}_{\lambda \in \vee_\Psi}$ is a \emph{Riesz basis} for $U$ when
\[
    \overline{\spann \Psi} = U, \quad \text{and} \quad \| {\bf c}\|_{\ell_2(\vee_\Psi)} \eqsim \|{\bf c}^\top \Psi\|_U \quad \text{for all} \quad {\bf c} \in \ell_2(\vee_\Psi).
\]
Thinking of $\Psi$ being a basis of wavelet-type, for indices $\lambda \in \vee_\Psi$, its \emph{level} is denoted $|\lambda| \in \N_0$. We call $\Psi$ \emph{uniformly local} when for all $\lambda \in \vee_\Psi$,
\[
    \diam (\supp \psi_\lambda) \lesssim 2^{-|\lambda|} ~~\text{and}~~  \# \{\mu \in \vee_\Psi : |\mu| = |\lambda|, |\supp \psi_\mu \cap  \supp \psi_\lambda | > 0\} \lesssim 1.
\]

Assume $\Sigma := \{ \sigma_\lambda : \lambda \in \vee_\Sigma\}$ is a uniformly local Riesz basis for $L_2(I)$ with $\{2^{-|\lambda|}\sigma_\lambda : \lambda \in \vee_\Sigma\}$ Riesz for $H^1(I)$.
Writing $w \in X$ as $\sum_{\lambda \in \vee_\Sigma} \sigma_\lambda \kron w_\lambda$ for some $w_\lambda \in V$,
we define the bounded, symmetric, and coercive bilinear form
\[
  (D_X \sum_{\lambda \in \vee_\Sigma} \sigma_\lambda \kron w_\lambda)(\sum_{\mu \in \vee_\Sigma} \sigma_\mu \kron v_\mu) := \sum_{\lambda \in \vee_\Sigma} \langle w_\lambda, v_\lambda \rangle_V + 4^{|\lambda|} \langle w_\lambda, v_\lambda \rangle_{V'}.
\]
The operator $D_X^\delta := {E_X^\delta}' D_X E_X^\delta$ is in $\Lis(X^\delta, {X^\delta}')$. Its
norm and that of its inverse are bounded uniformly in $\delta \in \Delta$.
When $X^\delta = \spann \Sigma^\delta \kron \Phi_{\vec x}^\delta$ for some $\Sigma^\delta \new{:= \{ \sigma_\lambda \colon \lambda \in \vee_{\Sigma^\delta}\}}\subset \Sigma$,
the matrix representation of $D_X^\delta$ \new{w.r.t.~$\Sigma^\delta \kron \Phi_{\vec x}^\delta$ is}
\[
  \new{({\mathcal F_{\Sigma^\delta \kron \Phi^\delta}})}' D_X^\delta \mathcal F_{\Sigma^\delta \kron \Phi^\delta} =: \mathbf D_X^\delta = \blockdiag[{\bf A}_{\vec x} + 4^{|\lambda|} \langle \Phi_{\vec x}^\delta, \Phi_{\vec x}^\delta \rangle_{V'}]_{\lambda \in \vee_{\Sigma^\delta}}.
\]
\begin{thm}[{\cite[\new{\S5.6.2}]{Stevenson2020b}}]
  \label{thm:DX-precond}
      Define ${\bf M}_{\vec x} := \langle \Phi_{\vec x}^\delta, \Phi_{\vec x}^\delta \rangle_H$.
  When we have matrices ${\bf K}_j \eqsim (\mathbf A_{\vec x} + 2^{j} \mathbf M_{\vec x})^{-1}$ uniformly in $\delta \in \Delta$ and $j \in \N_0$, it follows that
  \[
    \mathbf D_X^{-1} \eqsim {\bf K}_X := \blockdiag[{\bf K}_{|\lambda|} \mathbf A_{\vec x} {\bf K}_{|\lambda|}]_{\lambda \in \vee_{\Sigma^\delta}}.
  \]
  This yields an optimal preconditioner $K_X^\delta := {\mathcal F_{\Sigma^\delta \kron \Phi^\delta}} {\bf K}_X ({\mathcal F_{\Sigma^\delta \kron \Phi^\delta}})' \in \Lis({X^\delta}', X^\delta)$.
\end{thm}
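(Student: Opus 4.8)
The plan is to reduce the claimed spectral equivalence, through block-diagonality, to a single scale-dependent block, and then to an elementary scalar estimate. Since $\mathbf D_X^\delta$ and $\mathbf K_X$ are both block-diagonal with blocks indexed by $\lambda \in \vee_{\Sigma^\delta}$ that depend on $\lambda$ only through the level $j := |\lambda|$, and since for symmetric positive-definite block-diagonal matrices spectral equivalence is the same as uniform spectral equivalence of the corresponding blocks, it suffices to show, uniformly in $j \in \N_0$ and $\delta \in \Delta$, that $(\mathbf A_{\vec x} + 4^{j}\mathbf N)^{-1} \eqsim \mathbf C_j \mathbf A_{\vec x} \mathbf C_j$, where $\mathbf N := \langle \Phi_{\vec x}^\delta, \Phi_{\vec x}^\delta\rangle_{V'}$ is the $V'$-Gram block of $\mathbf D_X^\delta$.

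First I would pin down the ``ideal'' block inverse. The genuine dual-norm Gram matrix $\mathbf N$ is not directly computable, so I would start by establishing the discrete representation $\mathbf N \eqsim \mathbf M_{\vec x}\mathbf A_{\vec x}^{-1}\mathbf M_{\vec x}$, i.e.\ that the dual norm on $X_{\vec x}^\delta$ coming from the Gelfand triple is, up to $\delta$-uniform constants, realized by the discrete Riesz map $\mathbf A_{\vec x}$; this rests on the $\delta$-uniform equivalence of the continuous and discrete $V'$-norms on the finite element space. Writing $\mathbf B_j := \mathbf A_{\vec x} + 2^{j}\mathbf M_{\vec x}$, a direct expansion gives the algebraic identity $\mathbf B_j \mathbf A_{\vec x}^{-1}\mathbf B_j = (\mathbf A_{\vec x} + 4^{j}\mathbf M_{\vec x}\mathbf A_{\vec x}^{-1}\mathbf M_{\vec x}) + 2^{\,j+1}\mathbf M_{\vec x}$. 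Simultaneously diagonalizing the symmetric positive-definite pair $(\mathbf A_{\vec x},\mathbf M_{\vec x})$ reduces the bound $2^{\,j+1}\mathbf M_{\vec x} \le \mathbf A_{\vec x} + 4^{j}\mathbf M_{\vec x}\mathbf A_{\vec x}^{-1}\mathbf M_{\vec x}$ to the scalar AM--GM inequality $2\cdot 2^{j} m \le 1 + (2^{j}m)^2$, valid for every generalized eigenvalue $m \ge 0$. Together these give $\mathbf A_{\vec x} + 4^{j}\mathbf N \eqsim \mathbf B_j \mathbf A_{\vec x}^{-1}\mathbf B_j$ with constants independent of $j$ and $\delta$, hence $(\mathbf A_{\vec x} + 4^{j}\mathbf N)^{-1} \eqsim \mathbf B_j^{-1}\mathbf A_{\vec x}\mathbf B_j^{-1}$.

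It then remains to replace the exact factor $\mathbf B_j^{-1}$ by the cheaply applicable $\mathbf C_j \eqsim \mathbf B_j^{-1}$, i.e.\ to pass from $\mathbf B_j^{-1}\mathbf A_{\vec x}\mathbf B_j^{-1}$ to $\mathbf C_j\mathbf A_{\vec x}\mathbf C_j$, and I expect this to be the crux. Conjugating both matrices by $\mathbf B_j^{1/2}$ turns the comparison into $\mathbf E_j \mathbf G_j \mathbf E_j$ versus $\mathbf G_j$, where $\mathbf E_j := \mathbf B_j^{1/2}\mathbf C_j\mathbf B_j^{1/2} \eqsim \mathbf I$ and $\mathbf G_j := \mathbf B_j^{-1/2}\mathbf A_{\vec x}\mathbf B_j^{-1/2} \le \mathbf I$. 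The difficulty is that $\mathbf C \mapsto \mathbf C\mathbf A_{\vec x}\mathbf C$ is quadratic in $\mathbf C$, and matrix squaring is not operator monotone: spectral equivalence $\mathbf C_j \eqsim \mathbf B_j^{-1}$ alone does not propagate through this congruence with $j$-uniform constants, since $\mathbf E_j$ may rotate low-energy directions of $\mathbf G_j$ into high-energy ones (as $j \to \infty$, $\mathbf G_j$ acquires eigenvalues $\to 0$, exactly where the mismatch is amplified). Closing this step therefore must exploit more than the stated equivalence, namely the concrete (symmetric multigrid) realization of $\mathbf C_j$ and its compatibility with $\mathbf A_{\vec x}$; this is where I expect the argument of the cited source to do the real work.

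Finally, assembling the block-diagonal reduction, the identity-plus-AM--GM equivalence $(\mathbf A_{\vec x} + 4^{j}\mathbf N)^{-1} \eqsim \mathbf B_j^{-1}\mathbf A_{\vec x}\mathbf B_j^{-1}$, and the transfer to $\mathbf C_j\mathbf A_{\vec x}\mathbf C_j$ yields $\mathbf D_X^{-1} \eqsim \mathbf K_X$ with constants depending only on those in the hypotheses. The operator statement $K_X^\delta \in \Lis({X^\delta}', X^\delta)$ with uniformly bounded norms is then immediate, since $K_X^\delta$ is the pullback of $\mathbf K_X$ under the synthesis operator $\mathcal F_{\Sigma \kron \Phi}^\delta$, while $\mathbf D_X^\delta$ is the matrix representation of $D_X^\delta \in \Lis(X^\delta, {X^\delta}')$, whose norm and that of its inverse are uniformly bounded in $\delta$.
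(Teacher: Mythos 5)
You are working against a statement the paper itself does not prove: Theorem~\ref{thm:DX-precond} is imported wholesale from \cite{Stevenson2020b}, so there is no internal argument to compare yours with, and your proposal has to stand on its own. Its first half does: the reduction through block-diagonality to a single level-$j$ block, the identification $\mathbf N := \langle \Phi_{\vec x}^\delta,\Phi_{\vec x}^\delta\rangle_{V'} \eqsim \mathbf M_{\vec x}\mathbf A_{\vec x}^{-1}\mathbf M_{\vec x}$ (which on quasi-uniform meshes follows from uniform equivalence of the continuous and discrete dual norms), and the completing-the-square identity $\mathbf B_j\mathbf A_{\vec x}^{-1}\mathbf B_j = \mathbf A_{\vec x} + 4^j\mathbf M_{\vec x}\mathbf A_{\vec x}^{-1}\mathbf M_{\vec x} + 2^{j+1}\mathbf M_{\vec x}$ combined with AM--GM to get $(\mathbf A_{\vec x}+4^j\mathbf N)^{-1}\eqsim\mathbf B_j^{-1}\mathbf A_{\vec x}\mathbf B_j^{-1}$, are all correct and are the standard route.

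The problem is that you stop exactly at the step that carries all the content --- passing from $\mathbf B_j^{-1}\mathbf A_{\vec x}\mathbf B_j^{-1}$ to $\mathbf C_j\mathbf A_{\vec x}\mathbf C_j$ --- and you only conjecture that it is hard; in fact your suspicion can be sharpened to a disproof of the generic implication, which means your plan cannot be completed from the stated hypothesis by linear algebra alone. Work in a basis where $\mathbf M_{\vec x}=\mathbf I$ and $\mathbf A_{\vec x}=\mathrm{diag}(a,1)$ with $a=4^j$ and $t=2^j$; then $\mathbf G_j:=\mathbf B_j^{-1/2}\mathbf A_{\vec x}\mathbf B_j^{-1/2}=\mathrm{diag}(g_1,g_2)$ with $g_1=a/(a+t)\approx 1$ and $g_2=1/(1+t)\approx 2^{-j}$. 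Taking $\mathbf C_j:=\mathbf B_j^{-1/2}\mathbf E\mathbf B_j^{-1/2}$ with $\mathbf E=\left(\begin{smallmatrix}1&1/2\\1/2&1\end{smallmatrix}\right)$ gives $\tfrac12\mathbf B_j^{-1}\le\mathbf C_j\le\tfrac32\mathbf B_j^{-1}$, i.e.\ spectral equivalence with absolute constants, yet at $x=\mathbf B_j^{1/2}e_2$ one has $x^\top\mathbf C_j\mathbf A_{\vec x}\mathbf C_jx = e_2^\top\mathbf E\mathbf G_j\mathbf E e_2=\tfrac14 g_1+g_2\approx\tfrac14$ versus $x^\top\mathbf B_j^{-1}\mathbf A_{\vec x}\mathbf B_j^{-1}x=g_2\approx 2^{-j}$, a ratio that grows like $2^j$. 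So $\mathbf C_j\eqsim\mathbf B_j^{-1}$ alone does not yield $\mathbf C_j\mathbf A_{\vec x}\mathbf C_j\eqsim\mathbf B_j^{-1}\mathbf A_{\vec x}\mathbf B_j^{-1}$ uniformly in $j$: what is actually needed is the stronger \emph{pointwise} equivalence $\|\mathbf C_jx\|_{\mathbf A_{\vec x}}\eqsim\|\mathbf B_j^{-1}x\|_{\mathbf A_{\vec x}}$ for all $x$, a property of the concrete (multigrid) realization of $\mathbf C_j$ that must be established separately --- this is precisely what the argument in \cite{Stevenson2020b} has to supply, and what your proposal leaves open. As written, the proposal is therefore an accurate road map with the decisive inequality missing, not a proof.
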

In~\cite{Olshanskii2000} it was shown that \new{under a `full-regularity' assumption, for quasi-uniform meshes}, a multiplicative multigrid method yields \new{${\bf K}_j$} satisfying the conditions of Thm.~\ref{thm:DX-precond}, which can moreover be applied in linear time.

\subsection{Wavelets in time}\label{sec:wavelets}
The preconditioner ${\bf K}_X$ requires $X^\delta_t$ to be equipped with a \emph{wavelet} basis
$\Sigma^\delta$, whereas one typically uses a different \new{(single-scale)} basis $\Phi^\delta_t$ on
$X^\delta_t$. To bridge this gap, a basis transformation from $\Sigma^\delta$ to $\Phi^\delta_t$
is required. We define the wavelet transform as ${\bf W}_t := ({\mathcal F_{\Phi_t^\delta}})^{-1} {\mathcal F_{\Sigma^\delta}}$.\footnote{In literature, this transform is typically called an \emph{inverse wavelet transform}.}

Define $V_j := \spann \{\sigma_\lambda \in \Sigma: |\lambda| \leq j\}$. Equip each $V_j$ with a (single-scale) basis $\Phi_j$, and assume that $\new{\Phi^\delta_t} := \Phi_J$ for some $J$, so that $X^\delta_t := V_J$. Since $V_{j+1} = V_j \oplus \spann \Sigma_j$ where $\Sigma_j := \{\sigma_\lambda : |\lambda| = j\}$, there exist matrices ${\bf P}_j$ and ${\bf Q}_j$ such that $\Phi_j^\top = \Phi_{j+1}^\top {\bf P}_j$ and $\Psi_j^\top = \Phi_{j+1}^\top {\bf Q}_j$, with ${\bf M}_j := [{\bf P}_j | {\bf Q}_j]$ invertible.

Writing $v \in V_J$ in both forms
$v = {\bf c}_0^\top \Phi_0 + \sum_{j=0}^{J-1} {\bf d}_j^\top \Psi_j$ and $v = {\bf c}_J^\top \Phi_J$,
the basis transformation ${\bf W}_t := {\bf W}_J$ mapping wavelet coordinates $({\bf c}_0^\top, {\bf d}_0^\top, \ldots, {\bf d}_{J-1}^\top)$ to single-scale coordinates ${\bf c}_J$ satisfies
\be \label{eqn:waveletcomposition}
{\bf W}_J = {\bf M}_{J-1} \begin{bmatrix}
 {\bf W}_{J-1} & {\bf 0} \\ {\bf 0} & {\mathrm{\bf Id}}
\end{bmatrix}
, \quad \text{and} \quad {\bf W}_0 := {\mathrm {\bf Id}}.
\ee
Uniform locality of $\Sigma$ implies \emph{uniform sparsity} of the ${\bf M}_j$, i.e.~with $\mathcal O(1)$ nonzeros per row and column.
Then, assuming a geometrical increase in $\dim V_j$ in terms of $j$, which is true in the concrete setting below,
matrix-vector products ${\bf x} \mapsto {\bf W}_t {\bf x}$ can be performed (serially) in linear complexity; \new{cf.~\cite{Stevenson2003}}.

\subsection{Solving the system}\label{sub:solving}
The matrix representation of $S^\delta$ and $f^\delta$ from~\eqref{m9} w.r.t.~a basis  $\Phi_t^\delta \kron \Phi_{\vec x}^\delta$ of $X^\delta$ is
\[
{\bf S} := \new{({\mathcal F_{\Phi^\delta_t \kron \Phi^\delta_{\vec x}}})}' S^\delta \new{\mathcal F_{\Phi_t^\delta \kron \Phi_{\vec x}^\delta}} \quad \text{and} \quad
{\bf f} := \new{({\mathcal F_{\Phi_t^\delta \kron \Phi_{\vec x}^\delta}})}' f^\delta.
\]
\new{Envisioning an iterative solver, using \S\ref{sec:precond} we have a preconditioner in terms of the wavelet-in-time basis $\Sigma^\delta \kron \Phi_{\vec x}^\delta$, with which} their
matrix representation is
\be \label{eqn:schur-mat}
  \new{\hat {\bf S}} := \new{({\mathcal F_{\Sigma^\delta \kron \Phi_{\vec x}^\delta}})}' S^\delta \mathcal \new{\mathcal F_{\Sigma^\delta \kron \Phi_{\vec x}^\delta}} \quad \text{and} \quad
  \new{\hat {\bf f}} := \new{({\mathcal F_{\Sigma^\delta \kron \Phi_{\vec x}^\delta}})}' f^\delta.
\ee
These two forms are related: with the wavelet transform ${\bf W} := {\bf W}_t \kron {\mathrm{\bf Id}}_{\vec x}$, we have $\hat {\bf S} = {\bf W}^\top {\bf S} {\bf W}$ and $\hat {\bf f} = {\bf W}^\top {\bf f}$, and the matrix representation of~\eqref{m9} becomes
\begin{equation}
\label{eqn:wit-mat}
 \text{finding ${\bf w}$} \quad \text{s.t.} \quad \hat{\bf S}{\bf w} = \hat {\bf f}.
\end{equation}
We can then recover the solution in single-scale coordinates as ${\bf u} = {\bf W} {\bf w}$.

We use Preconditioned Conjugate Gradients (PCG), with preconditioner ${\bf K}_X$, to solve~\eqref{eqn:wit-mat}. Given an algebraic error tolerance $\eps > 0$ and current guess ${\bf w}_k$,
we monitor
${\bf r}_k^\top{\bf K}_X {\bf r}_k \leq \eps^2$ where ${\bf r}_k := \hat {\bf f} - \hat {\bf S} {\bf w}_k$.
This data is available within PCG, and constitutes a stopping criterium:
with $u_k^\delta := \mathcal F_{\Sigma^\delta \kron \Phi_{\vec x}^\delta} {\bf w}_k \in X^\delta$, we see
\be \label{eqn:stopping}
    {\bf r}_k^\top {\bf K}_X {\bf r}_k
    = (f^\delta - S^\delta u_k^\delta)(K_X^\delta (f^\delta - S^\delta u_k^\delta)) \eqsim \|u^\delta - u_k^\delta\|_{X}^2
\ee
\new{with $\eqsim$ following from \cite[(4.12)]{Stevenson2020b},}
so that the algebraic error satisfies $\|u^\delta - u^\delta_k\|_X \lesssim \eps$.

\section{A concrete setting: the reaction-diffusion equation}\label{sec:concrete}
On a bounded Lipschitz domain $\Omega \subset \R^d$, take $H := L_2(\Omega)$, $V := H^1_0(\Omega)$, and
\[
    a(t; \eta, \zeta) := \int_\Omega {\bf D} \grad \eta \cdot \grad \zeta + c \eta \zeta \D{\vec x}
\]
where ${\bf D} = {\bf D}^\top \in \R^{d \times d}$ is positive definite, and $c \geq 0$.\footnote{This is easily generalized to variable coefficients, but notation becomes more obtuse.} We note that $A(t)$ is symmetric and coercive. W.l.o.g.~we take $I := (0,1)$, i.e.~$T:=1$.

Fix $p_t, p_{\vec x} \in \N$.
With $\{\mathcal T_I\}$ the family of \new{quasi-uniform} partitions of $I$ into subintervals, and $\{\mathcal T_\Omega\}$ that of conforming quasi-uniform triangulations of $\Omega$, we define $\Delta$ as the collection of pairs $(\mathcal T_I, \mathcal I_\Omega)$. We construct our trial- and test spaces as
\[
  X^\delta := X^\delta_t \kron X^\delta_{\vec x}, \quad Y^\delta := Y^\delta_t \kron X^\delta_{\vec x},
\]
where, with $\mathbb P_{p}^{-1}(\mathcal T)$ denoting the space of piecewise degree-$p$ polynomials on $\mathcal T$,
\[
  X^\delta_t := H^1(I) \cap \mathbb P_{p_t}^{-1}(\mathcal T_I), \quad
  X^\delta_{\vec x} := H^1_0(\Omega) \cap \mathbb P_{p_{\vec x}}^{-1}(\mathcal T_\Omega), \quad
  Y^\delta_t := \mathbb P_{p_t}^{-1}(\mathcal T_I).
\]
These spaces satisfy condition \eqref{infsup1}, with coinciding spatial discretizations on $X^\delta$ and $Y^\delta$. For this choice of $\Delta$, inf-sup condition \eqref{infsup2} follows from~\cite[Thm.~4.3]{Stevenson2020}.

For $X^\delta_t$, we choose $\Phi_t^\delta$ to be the Lagrange basis of degree $p_t$ on $\mathcal T_I$; for $X^\delta_{\vec x}$, we choose $\Phi_{\vec x}^\delta$ to be \new{that} of degree $p_{\vec x}$ on $\mathcal T_\Omega$.
An orthogonal basis $\Xi^\delta$ for $Y^\delta_t$ may be built as piecewise shifted Legendre polynomials of degree $p_t$ w.r.t.~$\mathcal T_I$.

For $p_t = 1$, one finds a suitable wavelet basis $\Sigma$ in~\cite{Stevenson1998}.
For $p_t > 1$, one can either split the system into lowest- and higher-order parts and
perform the transform on the lowest-order part only, or construct
higher-order wavelets directly; cf.~\cite{Dijkema2009}.

Owing to the tensor-product structure of $X^\delta$ and $Y^\delta$ and of the operators $A$ and $\partial_t$, the matrix representation of our formulation becomes remarkably simple.

\begin{lem} \label{lem:schurmat}
Define ${\bf g} := \new{({\mathcal F_{\Xi^\delta \kron \Phi_{\vec x}^\delta}})}' g$, ${\bf u}_0 := \Phi_t^\delta(0) \kron \langle u_0, \Phi_{\vec x}^\delta \rangle_{L_2(\Omega)}$, and
\begin{alignat*}{2}
  {\bf T} &:= \langle \tfrac{\D}{\D t} \Phi_t^\delta, \Xi^\delta \rangle_{L_2(I)},& \quad {\bf N} &:= \langle \Phi_t^\delta, \Xi^\delta \rangle_{L_2(I)}, \\
  {\bf \Gamma}_0 &:= \Phi_t^\delta(0) [\Phi_t^\delta(0)]^\top,& \quad 
  {\bf M}_{\vec x} &:= \langle \Phi_{\vec x}^\delta, \Phi_{\vec x}^\delta \rangle_{L_2(\Omega)}, \\ 
  {\bf A}_{\vec x} &:=  \langle  {\bf D}\nabla \Phi_{\vec x}^\delta, \nabla \Phi_{\vec x}^\delta \rangle_{L_2(\Omega)} + c {\bf M}_{\vec x},& \quad
  {\bf B} &:= {\bf T} \kron {\bf M}_{\vec x} + {\bf N} \kron {\bf A}_{\vec x}.
\end{alignat*}
\new{With} ${\bf K}_Y := {\bf O}^{-1} \kron {\bf K}_{\vec x}$ from \S\ref{sec:DY-precond},  we can write ${\bf S}$ and ${\bf f}$ \new{from \S\ref{sub:solving}} as
\[
    {\bf S} = {\bf B}^\top {\bf K}_Y {\bf B} + {\bf \Gamma}_0 \kron {\bf M}_{\vec x}, \quad
    {\bf f} = {\bf B}^\top {\bf K}_Y {\bf g} + {\bf u}_0.
\]
Note that ${\bf N}$ and ${\bf T}$ are non-square, ${\bf \Gamma}_0$ is very sparse, and ${\bf T}$ is bidiagonal.
\end{lem}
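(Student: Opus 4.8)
The plan is to exploit the separation of variables present in every operator occurring in \eqref{m9}. Because the spatial form $a(t;\cdot,\cdot)$ is time-independent in this concrete setting, both $B=\partial_t+A$ and the trace operator $\gamma_0'\gamma_0$ split into sums of tensor products of a temporal with a spatial bilinear form; evaluating these on the tensor-product bases then turns each summand into a Kronecker product of a small temporal matrix with a spatial matrix. Concretely, I first compute the matrix representation ${\bf B}$ of $B^\delta:={E_Y^\delta}'B E_X^\delta$ with respect to the bases $\Phi_t^\delta\kron\Phi_{\vec x}^\delta$ of $X^\delta$ and $\Xi^\delta\kron\Phi_{\vec x}^\delta$ of $Y^\delta$, and then feed this into the Schur-complement expression using the factored form of $K_Y^\delta$.

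For the first step I evaluate $B$ on a trial function $\phi_t\kron\phi_{\vec x}$ against a test function $\xi\kron\psi_{\vec x}$. Writing $B=\partial_t+A$, the time-derivative term gives
\[
\bigl(\partial_t(\phi_t\kron\phi_{\vec x})\bigr)(\xi\kron\psi_{\vec x})=\left(\int_I \phi_t'(t)\,\xi(t)\D t\right)\langle\phi_{\vec x},\psi_{\vec x}\rangle_{L_2(\Omega)},
\]
which is precisely the entry of ${\bf T}\kron{\bf M}_{\vec x}$, while by bilinearity and time-independence of $a$ the reaction-diffusion term equals $\int_I\phi_t(t)\xi(t)\D t$ times $\langle{\bf D}\grad\phi_{\vec x},\grad\psi_{\vec x}\rangle_{L_2(\Omega)}+c\langle\phi_{\vec x},\psi_{\vec x}\rangle_{L_2(\Omega)}$, i.e.\ the entry of ${\bf N}\kron{\bf A}_{\vec x}$. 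Summing yields ${\bf B}={\bf T}\kron{\bf M}_{\vec x}+{\bf N}\kron{\bf A}_{\vec x}$. The same evaluation for the trace term gives $\bigl(\gamma_0'\gamma_0(\phi_t\kron\phi_{\vec x})\bigr)(\psi_t\kron\psi_{\vec x})=\phi_t(0)\psi_t(0)\,\langle\phi_{\vec x},\psi_{\vec x}\rangle_{L_2(\Omega)}$, which is the entry of ${\bf \Gamma}_0\kron{\bf M}_{\vec x}$; likewise $\gamma_0'u_0$ produces the vector ${\bf u}_0$, and $g$ its representation ${\bf g}$.

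It remains to assemble \eqref{eqn:schur-mat}. Substituting $K_Y^\delta=\mathcal F_{\Xi\kron\Phi_{\vec x}}^\delta{\bf K}_Y(\mathcal F_{\Xi\kron\Phi_{\vec x}}^\delta)'$ into the first term $(B^\delta)'K_Y^\delta B^\delta$ of $S^\delta$ (using ${E_X^\delta}'B'E_Y^\delta=(B^\delta)'$) and pre- and post-multiplying by the analysis and synthesis operators of $\Phi_t^\delta\kron\Phi_{\vec x}^\delta$, the two factors flanking ${\bf K}_Y$ become $(\mathcal F_{\Xi\kron\Phi_{\vec x}}^\delta)'{E_Y^\delta}'B E_X^\delta\mathcal F_{\Phi_t\kron\Phi_{\vec x}}^\delta={\bf B}$ and its transpose, the matrix of an adjoint operator being the transpose of the matrix of the operator. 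This produces ${\bf B}^\top{\bf K}_Y{\bf B}$; adding the matrix ${\bf \Gamma}_0\kron{\bf M}_{\vec x}$ of the trace term gives ${\bf S}={\bf B}^\top{\bf K}_Y{\bf B}+{\bf \Gamma}_0\kron{\bf M}_{\vec x}$, and the identical manipulation of $f^\delta$ yields ${\bf f}={\bf B}^\top{\bf K}_Y{\bf g}+{\bf u}_0$.

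The underlying computation is only separation of variables, so I expect no analytic difficulty; the work lies entirely in the bookkeeping. The step most prone to error is the collapse just described: one must use that ${\bf K}_Y$ is exactly the matrix of $K_Y^\delta$ in the $\Xi^\delta\kron\Phi_{\vec x}^\delta$ basis and that the analysis operator is the adjoint of the synthesis operator, so that the embeddings $E^\delta$ and the operators $\mathcal F^\delta$ cancel to leave precisely ${\bf B}^\top{\bf K}_Y{\bf B}$. A secondary point is the orientation of the rectangular temporal factors ${\bf T}$ and ${\bf N}$: since $Y_t^\delta=\mathbb P_{p_t}^{-1}(\mathcal T_I)$ is discontinuous while $X_t^\delta$ is continuous, we have $\dim Y_t^\delta>\dim X_t^\delta$, so ${\bf T}$ and ${\bf N}$ are genuinely non-square and their test/trial ordering must be kept consistent with the Kronecker-product convention (the spatial factors ${\bf M}_{\vec x}$ and ${\bf A}_{\vec x}$, being symmetric, cause no such worry).
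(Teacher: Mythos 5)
Your proposal is correct, and it supplies exactly the routine verification the paper omits: the paper states Lemma~\ref{lem:schurmat} without proof, and the intended argument is precisely your two steps — separation of variables on the tensor-product bases to get ${\bf B}={\bf T}\kron{\bf M}_{\vec x}+{\bf N}\kron{\bf A}_{\vec x}$, ${\bf \Gamma}_0\kron{\bf M}_{\vec x}$, ${\bf u}_0$ and ${\bf g}$, followed by inserting the factorization $K_Y^\delta=\mathcal F_{\Xi\kron\Phi_{\vec x}}^\delta{\bf K}_Y(\mathcal F_{\Xi\kron\Phi_{\vec x}}^\delta)'$ so that the synthesis/analysis operators collapse to ${\bf B}^\top{\bf K}_Y{\bf B}$. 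Your closing remarks on the test/trial orientation of the rectangular factors ${\bf T},{\bf N}$ are also consistent with the paper's own usage in the proof of Lemma~\ref{lem:ourform} (e.g.\ ${\bf T}={\mathcal F_\Xi^\delta}'T^\delta\mathcal F_{\Phi_t}^\delta$).
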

\noindent In fact, assumption \eqref{infsup1} allows us to write ${\bf S}$ in an even simpler form.

\begin{lem} \label{lem:ourform}
The matrix ${\bf S}$ can be written as
  \begin{align*}
    {\bf S} = {\bf A}_t \kron ({\bf M}_{\vec x} {\bf K}_{\vec x} {\bf M}_{\vec x}) &+ {\bf M}_t \kron ({\bf A}_{\vec x} {\bf K}_{\vec x} {\bf A}_{\vec x}) + {\bf L}^\top \kron ({\bf M}_{\vec x} {\bf K}_{\vec x} {\bf A}_{\vec x}) \\
      &+ {\bf L} \kron ({\bf A}_{\vec x} {\bf K}_{\vec x} {\bf M}_{\vec x}) + {\bf \Gamma}_0 \kron {\bf M}_{\vec x}
    \end{align*}
  where
  \[
    {\bf L} := \langle \tfrac{\D}{\D t} \Phi_t^\delta, \Phi_t^\delta \rangle_{L_2(I)}, \quad
    {\bf M}_t := \langle \Phi_t^\delta, \Phi_t^\delta \rangle_{L_2(I)}, \quad
    {\bf A}_t := \langle \tfrac{\D}{\D t} \Phi_t^\delta, \tfrac{\D}{\D t} \Phi_t^\delta \rangle_{L_2(I)}.
    \]
    This matrix representation does not depend on $Y^\delta_t$ or $\Xi^\delta$ at all.
\end{lem}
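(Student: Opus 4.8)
The plan is to start from the representation already proved in Lemma~\ref{lem:schurmat}, namely $\mathbf S = \mathbf B^\top \mathbf K_Y \mathbf B + \mathbf\Gamma_0\kron\mathbf M_{\vec x}$ with $\mathbf B = \mathbf T\kron\mathbf M_{\vec x}+\mathbf N\kron\mathbf A_{\vec x}$ and $\mathbf K_Y=\mathbf O^{-1}\kron\mathbf K_{\vec x}$, and to multiply everything out. Using bilinearity of $\kron$, the mixed-product rule $(\mathbf P\kron\mathbf Q)(\mathbf R\kron\mathbf U)=\mathbf{PR}\kron\mathbf{QU}$, and the symmetry of $\mathbf M_{\vec x}$ and $\mathbf A_{\vec x}$, the product $\mathbf B^\top\mathbf K_Y\mathbf B$ expands into exactly four Kronecker terms,
\begin{align*}
\mathbf B^\top\mathbf K_Y\mathbf B
&= (\mathbf T^\top\mathbf O^{-1}\mathbf T)\kron(\mathbf M_{\vec x}\mathbf K_{\vec x}\mathbf M_{\vec x})
 + (\mathbf T^\top\mathbf O^{-1}\mathbf N)\kron(\mathbf M_{\vec x}\mathbf K_{\vec x}\mathbf A_{\vec x})\\
&\quad + (\mathbf N^\top\mathbf O^{-1}\mathbf T)\kron(\mathbf A_{\vec x}\mathbf K_{\vec x}\mathbf M_{\vec x})
 + (\mathbf N^\top\mathbf O^{-1}\mathbf N)\kron(\mathbf A_{\vec x}\mathbf K_{\vec x}\mathbf A_{\vec x}).
\end{align*}
The whole lemma then reduces to identifying the four temporal factors with $\mathbf A_t$, $\mathbf L^\top$, $\mathbf L$, and $\mathbf M_t$, after which reinstating $\mathbf\Gamma_0\kron\mathbf M_{\vec x}$ yields the claimed expression.

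Next I would prove the single underlying identity driving all four. For any $f,g\in Y^\delta_t=\spann\Xi^\delta$,
\[
\langle f,g\rangle_{L_2(I)} = \langle f,\Xi^\delta\rangle^\top\,\mathbf O^{-1}\,\langle g,\Xi^\delta\rangle .
\]
This is immediate once one notes that the $\Xi^\delta$-coordinate vector of such an $f$ is $\mathbf O^{-1}\langle f,\Xi^\delta\rangle$ (solve the Gram system $\mathbf O\mathbf c=\langle f,\Xi^\delta\rangle$), so that $\langle f,g\rangle_{L_2(I)}=\mathbf c_f^\top\mathbf O\mathbf c_g=\langle f,\Xi^\delta\rangle^\top\mathbf O^{-1}\langle g,\Xi^\delta\rangle$ by symmetry of $\mathbf O$. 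In words, the $\mathbf O^{-1}$-sandwich evaluates the genuine $L_2(I)$ inner product of two elements of $Y^\delta_t$ from their test vectors against $\Xi^\delta$, regardless of which basis $\Xi^\delta$ of $Y^\delta_t$ is chosen.

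The crux is that assumption~\eqref{infsup1}---both $\Phi^\delta_t\subset X^\delta_t\subset Y^\delta_t$ and $\tfrac{\D}{\D t}\Phi^\delta_t\subset \tfrac{\D}{\D t}X^\delta_t\subset Y^\delta_t$---places every $\phi_i$ and every $\tfrac{\D}{\D t}\phi_i$ inside $Y^\delta_t$, so the displayed formula applies to them. The columns of $\mathbf N$ and $\mathbf T$ are precisely the test vectors $\langle\phi_i,\Xi^\delta\rangle$ and $\langle\tfrac{\D}{\D t}\phi_i,\Xi^\delta\rangle$; substituting them gives $(\mathbf N^\top\mathbf O^{-1}\mathbf N)_{i,k}=\langle\phi_i,\phi_k\rangle_{L_2(I)}=(\mathbf M_t)_{i,k}$, $(\mathbf T^\top\mathbf O^{-1}\mathbf T)_{i,k}=\langle\tfrac{\D}{\D t}\phi_i,\tfrac{\D}{\D t}\phi_k\rangle_{L_2(I)}=(\mathbf A_t)_{i,k}$, while the two mixed products $\mathbf T^\top\mathbf O^{-1}\mathbf N$ and $\mathbf N^\top\mathbf O^{-1}\mathbf T$ evaluate to $\langle\tfrac{\D}{\D t}\phi_\cdot,\phi_\cdot\rangle_{L_2(I)}$ and its transpose, i.e.\ to $\mathbf L^\top$ and $\mathbf L$ in the order fixed by the indexing convention. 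Substituting these four identities into the expansion and adding back $\mathbf\Gamma_0\kron\mathbf M_{\vec x}$ proves the lemma; the disappearance of $Y^\delta_t$ and $\Xi^\delta$ is exactly the statement that each occurrence of $\mathbf O^{-1}$ collapses to a basis-free $L_2(I)$ inner product.

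I expect the only genuine subtlety to be this last point: the reduction works verbatim because $\tfrac{\D}{\D t}X^\delta_t\subset Y^\delta_t$, not merely $X^\delta_t\subset Y^\delta_t$. If the derivatives $\tfrac{\D}{\D t}\phi_i$ left $Y^\delta_t$, the $\mathbf O^{-1}$-sandwich would compute the inner products of their $L_2(I)$-projections onto $Y^\delta_t$ rather than of the functions themselves, and $\mathbf T^\top\mathbf O^{-1}\mathbf T$ would only approximate $\mathbf A_t$; thus condition~\eqref{infsup1} is precisely what makes the identification exact. Everything else is routine Kronecker bookkeeping.
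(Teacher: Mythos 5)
Your proposal is correct and follows essentially the same route as the paper: expand $\mathbf B^\top\mathbf K_Y\mathbf B$ into four Kronecker products via the mixed-product rule, then collapse each temporal factor $\mathbf T^\top\mathbf O^{-1}\mathbf T$, $\mathbf N^\top\mathbf O^{-1}\mathbf N$, $\mathbf T^\top\mathbf O^{-1}\mathbf N$ to an exact $L_2(I)$ Gram matrix; your coordinate-vector identity $\langle f,g\rangle_{L_2(I)}=\langle f,\Xi^\delta\rangle^\top\mathbf O^{-1}\langle g,\Xi^\delta\rangle$ for $f,g\in Y^\delta_t$ is just the concrete matrix form of the paper's operator factorization through $M^\delta$ and the embedding $F^\delta_t$. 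Your explicit remark that the $\mathbf A_t$ term needs $\tfrac{\D}{\D t}X^\delta_t\subset Y^\delta_t$ (not merely $X^\delta_t\subset Y^\delta_t$) is accurate and slightly more careful than the paper's ``similar arguments hold for the other terms.''
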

\begin{proof}
The expansion of ${\bf B} := {\bf T} \kron {\bf M}_{\vec x} + {\bf N} \kron {\bf A}_{\vec x}$ in ${\bf S}$ yields a sum of five Kronecker products, one of which is
\begin{align*}
    ({\bf T}^\top \kron {\bf M}_{\vec x}) {\bf K}_Y ({\bf T} \kron {\bf A}_{\vec x})
     = ({\bf T}^\top {\bf O}^{-1} {\bf N}) \kron ({\bf M}_{\vec x} {\bf K}_{\vec x} {\bf A}_{\vec x}).
\end{align*}
We will show that ${\bf T}^\top {\bf O}^{-1} {\bf N} = {\bf L}^\top$; similar arguments hold for the other terms.
Thanks to $X^\delta_t \subset Y^\delta_t$, we can define the trivial embedding $F^\delta_t : X^\delta_t \to Y^\delta_t$. Defining
\begin{alignat*}{2}
T^\delta &\colon X_t^\delta \to {Y_t^\delta}',& \quad (T^\delta u)(v) &:= \langle \tfrac{\D}{\D t} u, v \rangle_{L_2(I)},\\
M^\delta &\colon Y_t^\delta \to {Y_t^\delta}',& \quad (M^\delta u)(v) &:= \langle u,v\rangle_{L_2(I)},
\end{alignat*}
we find ${\bf O} = \new{({\mathcal F_{\Xi^\delta}})}' M^\delta \mathcal F_{\Xi^\delta}$, ${\bf N} = \new{({\mathcal F_{\Xi^\delta}})}' M^\delta \mathcal F_t^\delta \mathcal  F_{\Phi_t^\delta}$ and ${\bf T} = \new{({\mathcal F_{\Xi^\delta}})'} T^\delta \mathcal F_{\Phi_t^\delta}$, \new{so}
\[
{\bf T}^\top {\bf O}^{-1} {\bf N} = \new{({\mathcal F_{\Phi_t^\delta}})}' {T^\delta}' F^\delta_t \mathcal F_{\Phi_t^\delta} = \langle \Phi_t, \tfrac{\D}{\D t} \Phi_t \rangle_{L_2(I)} = {\bf L}^\top.\qedhere
\]
\end{proof}

\subsection{Parallel complexity}
The \emph{parallel complexity} of our algorithm is the asymptotic runtime of
solving~\eqref{eqn:wit-mat} for ${\bf u} \in \R^{N_t N_{\vec x}}$ in terms of
$N_t := \dim X^\delta_t$ and $N_{\vec x} := \dim X^\delta_{\vec x}$,
given sufficiently many parallel processors and assuming no communication cost.

We understand the serial (resp.~parallel) cost of a matrix ${\bf B}$,
denoted $C^s_{\bf B}$ (resp.~$C^p_{\bf B}$), as the asymptotic runtime of performing
${\bf x} \mapsto {\bf B} {\bf x} \in \R^N$ in terms of $N$, on a single (resp.~sufficiently many) processors at no communication cost.
For \emph{uniformly sparse} matrices, i.e.~with $\mathcal O(1)$ nonzeros per row and column, the serial cost is $\mathcal O(N)$, and the parallel cost is $\mathcal O(1)$ by computing each cell of the output concurrently.

From Theorem~\ref{thm:DX-precond}, we see that ${\bf K}_X$ is such that $\kappa_2({\bf K}_X \hat {\bf S}) \lesssim 1$
uniformly in $\delta \in \Delta$. Therefore, for a given algebraic error tolerance $\eps$,
we require $\mathcal O(\log \eps^{-1})$ \new{PCG iterations.} Assuming that the parallel cost of matrices dominates that of vector addition and inner products,
the parallel complexity of a single PCG iteration is dominated by the cost of applying ${\bf K}_X$ and $\hat {\bf S}$.
As $\new{\hat {\bf S}} = {\bf W}^\top{\bf S}{\bf W}$, our algorithm runs in \new{complexity}
\new{\begin{equation} \label{eqn:cost}
\mathcal O(\log \eps^{-1}[C^\circ_{{\bf K}_X} + C^\circ_{{\bf W}^\top} +  C^\circ_{\bf S} + C^\circ_{{\bf W}}]) \quad (\circ \in \{s,p\}).
\end{equation}}

\begin{thm}\label{thm:main}
For fixed algebraic error tolerance $\eps > 0$, our algorithm runs in
\begin{itemize}
    \item serial complexity $\mathcal O(N_t N_{\vec x})$;
    \item time-parallel complexity $\mathcal O(\log (N_t) N_{\vec x})$;
    \item space-time-parallel complexity $\mathcal \new{O(\log (N_t N_{\vec x})})$.
\end{itemize}
\end{thm}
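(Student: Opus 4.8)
The theorem asserts three complexity bounds—serial, time-parallel, and space-time-parallel—for the full PCG solve.The plan is to bound the four matrix costs $C^\circ_{{\bf K}_X}$, $C^\circ_{\bf W}$, $C^\circ_{{\bf W}^\top}$, and $C^\circ_{\bf S}$ appearing in the per-iteration estimate of Sect.~\ref{sub:solving}, separately in each of the three computational models, and then multiply by the iteration count $\mathcal O(\log \eps^{-1}) = \mathcal O(1)$ for fixed $\eps$. The whole argument rests on two structural facts already available: the block-diagonal-in-time form of ${\bf K}_X$ from Theorem~\ref{thm:DX-precond}, and the Kronecker-sum form of ${\bf S}$ from Lemma~\ref{lem:ourform}, together with the factorization ${\bf W} = {\bf W}_t \kron {\mathrm{\bf Id}}_{\vec x}$.

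First I would record the building-block costs. The spatial finite-element matrices ${\bf M}_{\vec x}$ and ${\bf A}_{\vec x}$ are uniformly sparse, hence cost $\mathcal O(N_{\vec x})$ serially and $\mathcal O(1)$ in parallel; the multigrid factors ${\bf K}_{\vec x}$ and $\mathbf C_j$ apply in $\mathcal O(N_{\vec x})$ serially (cf.~\cite{Olshanskii2000}) and, being $\mathcal O(\log N_{\vec x})$-level cycles with $\mathcal O(1)$ parallel work per level, in $\mathcal O(\log N_{\vec x})$ in parallel. The temporal matrices ${\bf A}_t, {\bf M}_t, {\bf L}$ are banded (hence uniformly sparse) and ${\bf \Gamma}_0$ is very sparse, so each costs $\mathcal O(N_t)$ serially and $\mathcal O(1)$ in parallel.

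Next I would lift these to the Kronecker level. For a term ${\bf A}_t \kron ({\bf M}_{\vec x} {\bf K}_{\vec x} {\bf M}_{\vec x})$, and the four analogous terms of ${\bf S}$, reshaping the coefficient vector into a space-time array splits the matvec into $N_t$ independent spatial applies followed by $N_{\vec x}$ independent temporal applies. Summing gives $C^s_{\bf S} = \mathcal O(N_t N_{\vec x})$, while running the $N_t$ spatial solves and the $N_{\vec x}$ temporal solves each concurrently gives $C^p_{\bf S} = \mathcal O(\log N_{\vec x})$, dominated by the spatial multigrid. The block-diagonal structure of ${\bf K}_X$ makes its $N_t$ blocks $\mathbf C_{|\lambda|} {\bf A}_{\vec x} \mathbf C_{|\lambda|}$ completely independent, so by the same reasoning $C^s_{{\bf K}_X} = \mathcal O(N_t N_{\vec x})$ and $C^p_{{\bf K}_X} = \mathcal O(\log N_{\vec x})$.

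The delicate ingredient, and the one I expect to be the main obstacle, is the parallel cost of the wavelet transform ${\bf W}_t$. Its serial linearity $\mathcal O(N_t)$ follows from uniform sparsity of the ${\bf M}_j$ and the geometric growth of $\dim V_j$ (Sect.~\ref{sec:wavelets}). For parallelism one must exploit the recursion~\eqref{eqn:waveletcomposition}: reconstructing level $j{+}1$ from level $j$ is a single uniformly-sparse multiply, hence $\mathcal O(1)$ in parallel, but the $J = \mathcal O(\log N_t)$ levels form a sequential critical path, yielding $C^p_{{\bf W}_t} = \mathcal O(\log N_t)$ and therefore $C^p_{\bf W} = C^p_{{\bf W}^\top} = \mathcal O(\log N_t)$. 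In the time-parallel model the spatial direction is instead traversed serially, inflating each of the $\mathcal O(\log N_t)$ levels to $\mathcal O(N_{\vec x})$ work and giving $\mathcal O(N_{\vec x} \log N_t)$. Assembling the four costs then yields the three claimed bounds: $\mathcal O(N_t N_{\vec x})$ serially, $\mathcal O(\log N_t + \log N_{\vec x})$ with full space-time parallelism, and $\mathcal O(\log(N_t) N_{\vec x})$ with time parallelism only, where this last bound is dominated precisely by the wavelet transform.
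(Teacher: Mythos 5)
Your proposal is correct and follows essentially the same route as the paper: bound the per-iteration costs $C^\circ_{{\bf K}_X}$, $C^\circ_{\bf W}$, $C^\circ_{{\bf W}^\top}$, $C^\circ_{\bf S}$ in each model via uniform sparsity, Kronecker-product reshaping, multigrid cost assumptions, and the $J=\mathcal O(\log N_t)$-level factorization \eqref{eqn:waveletcomposition} of ${\bf W}_t$, then multiply by the $\mathcal O(\log\eps^{-1})=\mathcal O(1)$ PCG iteration count guaranteed by $\kappa_2({\bf K}_X\hat{\bf S})\lesssim 1$. The only cosmetic difference is that you expand ${\bf S}$ via Lemma~\ref{lem:ourform} where the paper uses the factored form ${\bf S}={\bf B}^\top{\bf K}_Y{\bf B}+{\bf \Gamma}_0\kron{\bf M}_{\vec x}$ of Lemma~\ref{lem:schurmat}; both yield identical cost bounds.
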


\begin{proof}
\new{We absorb the constant factor $\log \eps^{-1}$ of~\eqref{eqn:cost} into $\mathcal O$.}
We analyse the cost of every matrix separately.

\subsubsection{The (inverse) wavelet transform.}
As ${\bf W} = {\bf W}_t \kron {\mathrm {\bf Id}}_{\vec x}$, its serial cost equals $\mathcal O( C^s_{{\bf W}_t} N_{\vec x})$.
The choice of wavelet allows performing ${\bf x} \mapsto {\bf W}_t {\bf x}$ at linear serial cost (cf.~\S\ref{sec:wavelets}), so that $C^s_{\bf W} = \mathcal O(N_t N_{\vec x})$.

Using~\eqref{eqn:waveletcomposition}, we write ${\bf W}_t$ as the composition of $J$ matrices, each uniformly sparse and hence at parallel cost $\mathcal{O}(1)$.
\new{Because the mesh in time is quasi-uniform, we have $J \eqsim \log N_t$}. We find that $C^p_{{\bf W}_t} = \mathcal O(J) = \mathcal O(\log N_t)$, so that the time-parallel cost of ${\bf W}$ equals $\mathcal O(\log(N_t) N_{\vec x})$. By exploiting spatial parallelism \new{as well}, we find $C^p_{\bf W} =  \mathcal O(\log N_t)$.
\new{Analogous arguments hold for ${\bf W}_t^\top$ and ${\bf W}^\top$.}

\subsubsection{The preconditioner.}
Recall that ${\bf K}_X := \blockdiag[{\bf K}_{|\lambda|} \new{\mathbf A_{\vec x}} {\bf K}_{|\lambda|}]_{\lambda}$.
\new{Since the cost of ${\bf K}_j$ is independent of $j$,} we see that
\[
 C^s_{{\bf K}_X} = \mathcal O\big(N_t \cdot (2 C^s_{{\bf K}_j} + C^s_{{\bf A}_{\vec x}})\big) = \mathcal O(2 N_t C^s_{{\bf K}_j} + N_t N_{\vec x}).
\]
Implementing the ${\bf K}_j$ as typical multiplicative multigrid solvers with linear serial cost, we find $C^s_{{\bf K}_X} = \mathcal O(N_t N_{\vec x})$.

Through temporal parallelism, we can apply each block of ${\bf K}_X$ concurrently, resulting in a time-parallel cost of $\mathcal O(2 C^s_{{\bf K}_j} + C^s_{{\bf A}_{\vec x}}) = \mathcal O(N_{\vec x})$.

By parallelizing in space as well, we reduce the cost of the uniformly sparse ${\bf A}_{\vec x}$ to $\mathcal O(1)$.
\new{The parallel cost of multiplicative multigrid on quasi-uniform triangulations is $\mathcal O(\log N_{\vec x})$; cf.~\cite{McBryan1991}. It follows that $C^p_{{\bf K}_X} = \mathcal O(\log N_{\vec x})$.}

\subsubsection{The Schur matrix.} Using Lemma~\ref{lem:schurmat}, we write ${\bf S} = {\bf B}^\top {\bf K}_Y {\bf B} + {\bf \Gamma}_0 \kron {\bf M}_{\vec x}$ where ${\bf B} = {\bf T} \kron {\bf M}_{\vec x} + {\bf N} \kron {\bf A}_{\vec x}$, which immediately reveals that
\begin{align*}
    C^s_{\bf S} &= C^s_{{\bf B}^\top} + C^s_{{\bf K}_Y} + C^s_{\bf B} + C^s_{{\bf \Gamma}_0}\cdot C^s_{\bf M} = \mathcal O(N_t N_{\vec x} + C^s_{{\bf K}_Y}), \quad \text{and} \\
    C^p_{\bf S} &= \max \big\{ C^p_{{\bf B}^\top} + C^p_{{\bf K}_Y} + C^p_{\bf B}, ~~ C^p_{{\bf \Gamma}_0} \cdot C^p_{\bf M} \big\} = \mathcal O(C^p_{{\bf K}_Y})
\end{align*}
because every matrix except ${\bf K}_Y$ is uniformly sparse. With arguments similar to the previous paragraph, we see that ${\bf K}_Y$ (and hence ${\bf S}$) has serial cost $\mathcal O(N_t N_{\vec x})$, time-parallel cost $\mathcal O(N_{\vec x})$, and space-time-parallel cost $\mathcal O(\log N_{\vec x})$.
\end{proof}

\subsection{\new{Solving to higher accuracy}}
Instead of \emph{fixing} the algebraic error tolerance, maybe more realistic is is to desire a solution $\new{\tilde u^\delta} \in X^\delta$ for which the error \new{is proportional} to the discretization error, i.e.~$\|u - \tilde u^\delta\|_X \lesssim \inf_{u_\delta \in X^\delta} \|u - u_\delta\|_X$.

\new{Assuming that this error decays with a (problem-dependent) rate $s > 0$, i.e.~$\inf_{u_\delta \in X^\delta} \|u - u_\delta\|_X \lesssim (N_t N_{\vec x})^{-s}$, then the same holds for the solution $u^\delta$ of~\eqref{m9}; cf.~Thm.~\ref{thm:quasiopt}. When the algebraic error tolerance decays as $\eps \lesssim (N_t N_{\vec x})^{-s}$, a triangle inequality and~\eqref{eqn:stopping} show that the error of our solution $\tilde u^\delta$ obtained by PCG decays at rate $s$ too.}

\new{In this case, $\log \epsilon^{-1} = \mathcal O(\log(N_t N_{\vec x}))$. From~\eqref{eqn:cost} and the proof of Theorem~\ref{thm:main}, we find our algorithm to run in superlinear serial complexity $\mathcal O(N_t N_{\vec x} \log (N_t N_{\vec x}))$, time-parallel complexity $\mathcal O(\log^2(N_t) \log(N_{\vec x}) N_{\vec x})$, and polylogarithmic complexity $\mathcal O(\logt(N_t N_{\vec x}))$ parallel in space and time.}

\new{For elliptic PDEs, algorithms are available that offer quasi-optimal solutions, serially in linear complexity $\mathcal O(N_{\vec x})$---the cost of a serial solve to \emph{fixed} algebraic error---and in parallel in $\mathcal O(\logt N_{\vec x})$, by combining a \emph{nested iteration} with parallel multigrid; cf.~\cite[Ch.~5]{Hackbusch1985} and~\cite{Brandt1981}.}

In~\cite{Horton1995}, the question is posed whether ``good serial algorithms for parabolic PDEs
are intrinsically as parallel as good serial algorithms for elliptic PDEs'', basically
asking if the \new{lower bound of $\mathcal O(\logt (N_t N_{\vec x}))$} can be attained \new{by an algorithm that runs serially in $\mathcal O(N_t N_{\vec x})$; see~\cite[\S 2.2]{Worley1991} for a formal discussion}.

\new{Nested iteration drives down the serial complexity of our algorithm to a linear $\mathcal O(N_t N_{\vec x})$, and also improves the time-parallel complexity to $\mathcal O(\log (N_t) N_{\vec x})$.\footnote{\new{Interestingly, nested iteration offers no improvements parallel in space \emph{and} time, with complexity still $\mathcal O(\logt(N_t N_{\vec x}))$}.}}
\new{This is on par with the best-known results for elliptic problems, so}
we answer the question posed in~\cite{Horton1995} in the affirmative.

\section{Numerical experiments}\label{sec:numres}
We take the simple heat equation, \new{i.e.~$D = \mathrm{\bf Id}_{\vec x}$ and $c = 0$}. We select $p_t = p_{\vec x} = 1$, i.e.~lowest order finite elements in space and time.
We will use the 3-point wavelet introduced in~\cite{Stevenson1998}.

We implemented our algorithm in Python using the open source finite element library \texttt{NGSolve}~\cite{Schoberl2014} for meshing and discretization of the bilinear forms in space and time, \texttt{MPI} through \texttt{mpi4py} \cite{Dalcin2005} for distributed computations, and \texttt{SciPy}~\cite{Virtanen2020} for the sparse matrix-vector computations. \new{The source code is available at~\cite{VanVenetie2020b}.}

\subsection{Preconditioner calibration on a 2D problem}
Our wavelet-in-time, multigrid-in-space preconditioner is optimal: $\kappa_2({\bf K}_X \hat {\bf S}) \lesssim 1$. Here we will investigate
this condition number quantitatively.

As a model problem, we partition the temporal interval $I$ uniformly into $2^J$ subintervals. We consider the domain $\Omega := [0,1]^2$, and triangulate it uniformly into $4^K$ triangles. We set $N_t := \dim X^\delta_t = 2^J + 1$ and $N_{\vec x} := \dim X^\delta_{\vec x} = (2^K - 1)^2$.

We start by using \new{direct inverses ${\bf K}_j = ({\bf A}_{\vec x} + 2^j {\bf M}_{\vec x})^{-1}$} and \new{${\bf K}_{\vec x} = {\bf A}_{\vec x}^{-1}$} to
determine the best possible condition numbers.
We found that replacing ${\bf K}_j$ by \new{${\bf K}_j^{\alpha} = (\alpha {\bf A}_{\vec x} + 2^j {\bf M}_{\vec x})^{-1}$} for $\alpha = 0.3$ gave better conditioning; see also the left of Table~\ref{tbl:direct-inverse}.
At the right of \new{Table~\ref{tbl:direct-inverse}}, we see that the condition numbers are very robust with respect to spatial refinements, but less so for refinements in time. Still, at \new{$N_t = 16\,129$}, we observe a modest $\kappa_2({\bf K}_X \hat {\bf S})$ of $8.74$.

Replacing the direct inverses with multigrid solvers, we found a good balance between speed and conditioning at 2 V-cycles with 3 Gauss-Seidel smoothing steps per grid. We decided to use these for our experiments.

\begin{table}
\vspace{-1em}
\centering
\begin{minipage}[t]{0.28\linewidth}
\strut\vspace*{-\baselineskip}\newline
\includegraphics[trim=10 30 10 46.5, clip, width=\linewidth]{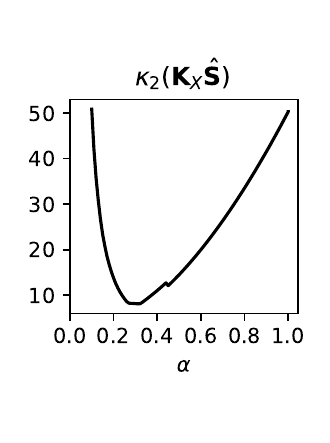}
\end{minipage}
\begin{minipage}[t]{0.7\linewidth}
\strut\vspace*{-\baselineskip}\newline
\resizebox{\linewidth}{!}{%
\begin{tabular}{rrrrrrrrrrr}
\toprule
\multicolumn{2}{r}{$N_t = 65$}   &  129  &  257  &  513  &        1\,025 & 2\,049 & 4\,097 & 8\,193 \\
\midrule
$N_{\vec x} = 49$    &  6.34 &  7.05 &        7.53 &        7.89 &        8.15 & 8.37 & 8.60 & 8.78 \\
225   &  6.33 &  6.89 &        7.55 &        7.91 &        8.14 & 8.38 & 8.57 & 8.73 \\
961   &  6.14 &  6.89 &        7.55 &        7.93 &        {\bf 8.15}  & 8.38 & 8.57 & 8.74\\
3\,969  &  6.14 &  7.07 &        7.56 &        7.87 &        8.16 & 8.38 & 8.57 & 8.74 \\
16\,129 &  6.14 &  6.52 &  7.55 & 7.86 & 8.16 & 8.37 & 8.57 & 8.74 \\
\bottomrule
\end{tabular}}
\end{minipage}
\label{tbl:direct-inverse}
\caption{Computed condition numbers $\kappa_2({\bf K}_X \hat {\bf S})$. Left: fixed $N_t = 1025$, $N_{\vec x} = 961$ for varying $\alpha$. Right: fixed $\alpha = 0.3$ for varying $N_t$ and $N_{\vec x}$.}
\vspace{-3em}
\end{table}

\subsection{Time-parallel results}
We perform computations on Cartesius, the Dutch supercomputer. Each Cartesius node has 64GB of memory and 12 cores (at 2 threads per core) running at 2.6GHz.
Using the preconditioner detailed above, we iterate PCG on~\eqref{eqn:wit-mat}
with ${\bf S}$ computed as in Lemma~\ref{lem:ourform},
until achieving an algebraic error of $\eps = 10^{-6}$; see also \S\ref{sub:solving}.
For the spatial multigrid solvers, we use $2$ V-cycles with $3$ Gauss-Seidel smoothing steps per grid.

\subsubsection{Memory-efficient time-parallel implementation.}
For ${\bf X} \in \R^{N_{\vec x} \times N_t}$, we define $\Vec({\bf X}) \in \R^{N_t N_{\vec x}}$ as
the vector obtained by stacking columns of ${\bf X}$ vertically.
For memory efficency, we do not build matrices of the form ${\bf B}_t \kron {\bf B}_{\vec x}$ appearing in Lemma~\ref{lem:ourform}
directly, but instead perform matrix-vector products using the identity
\begin{equation} \label{eqn:kron-id}
  ({\bf B}_t \kron {\bf B}_{\vec x}) \Vec({\bf X}) = \Vec({\bf B}_{\vec x} ({\bf B}_t {\bf X}^\top)^\top) =
(\mathrm{\bf Id}_t \kron {\bf B}_{\vec x})\Vec({\bf B}_t {\bf X}^\top).
\end{equation}

Each parallel processor stores only a subset of the temporal degrees of freedom, e.g.~a subset of columns of ${\bf X}$. When ${\bf B}_t$ is uniformly sparse, which holds true for all of our temporal matrices,
using~\eqref{eqn:kron-id} we can evaluate $({\bf B}_t \kron {\bf B}_{\vec x}) \Vec({\bf X})$
in $\mathcal O(C^s_{{\bf B}_{\vec x}})$ operations parallel in time\new{: on}
each parallel processor, we compute `our' columns of ${\bf Y} := {\bf B}_t {\bf X}^\top$ by receiving the necessary columns of ${\bf X}$ from neighbouring processors, and then compute ${\bf B}_{\vec x} {\bf Y}^\top$ without communication.

The preconditioner ${\bf K}_X$ is block-diagonal, \new{making its time-parallel application trivial}. Representing the wavelet transform of \S\ref{sec:wavelets} as the composition of $J$ Kronecker products allows a time-parallel implementation using the above.

\subsubsection{2D problem.} We select $\Omega := [0,1]^2$ with a uniform triangulation $\mathcal T_{\Omega}$, and we triangulate $I$ uniformly into $\mathcal T_I$. We select the smooth solution $u(t,x,y) := \exp(-2 \pi^2 t) \sin(\pi x) \sin(\pi y)$, \new{so} the problem has vanishing forcing data \new{$g$}.

Table~\ref{tbl:2d-strong} details the strong scaling results, i.e.~fixing the problem size and increasing the number of processors $P$.
We triangulate $I$ into $2^{14}$ time slabs, yielding $N_t = 16\,385$ temporal degrees of freedom, and $\Omega$ into $4^{8}$ triangles, yielding a $X^\delta_{\vec x}$ of dimension $N_{\vec x} = 65\,025$.
The resulting system contains $1\,065\,434\,625$ degrees of freedom and our solver reaches the algebraic error tolerance after 16 iterations. In perfect strong scaling, the total number of CPU-hours remains constant.
Even at 2\,048 processors, we observe a parallel efficiency of around $92.9\%$, solving this system in a modest 11.7 CPU-hours.
Acquiring strong scaling results on a single node was not possible due to memory limitations.
\begin{table}[p]
\centering
\scalebox{0.9}{%
\begin{tabular}{rrrrrrrr}
\toprule
 \thead{$P$} &     \thead{$N_t$} &        \thead{$N_{\vec x}$} & \thead{$N = N_t N_{\vec x}$} &  \thead{its} &   \thead{time (s)}  &\thead{time/it (s)}&  \thead{CPU-hrs} \\
\midrule
   1--16 &  16\,385 &  65\,025 &  1\,065\,434\,625 & \multicolumn{4}{c}{\Vhrulefill~out of memory~\Vhrulefill} \\
      32 &  16\,385 &  65\,025 &  1\,065\,434\,625 &     16 & 1224.85 &  76.55 &  10.89 \\
      64 &  16\,385 &  65\,025 &  1\,065\,434\,625 &     16 &  615.73 &  38.48 &  10.95 \\
     128 &  16\,385 &  65\,025 &  1\,065\,434\,625 &     16 &  309.81 &  19.36 &  11.02 \\
     256 &  16\,385 &  65\,025 &  1\,065\,434\,625 &     16 &  163.20 &  10.20 &  11.61 \\
     512 &  16\,385 &  65\,025 &  1\,065\,434\,625 &     16 &   96.54 &   6.03 &  13.73 \\
     512 &  16\,385 &  65\,025 &  1\,065\,434\,625 &     16 &   96.50 &   6.03 &  13.72 \\
  1\,024 &  16\,385 &  65\,025 &  1\,065\,434\,625 &     16 &   45.27 &   2.83 &  12.88 \\
  2\,048 &  16\,385 &  65\,025 &  1\,065\,434\,625 &     16 &       20.59 &                 1.29 &           11.72 \\
\bottomrule
\end{tabular}}
\vspace{0.5em}
\caption{Strong scaling results for the 2D problem.}
\label{tbl:2d-strong}
\end{table}
\begin{table}[p]
\centering
\scalebox{0.9}{%
\begin{tabular}{rrrrrrrrrr}
\toprule
 &\thead{$P$} &     \thead{$N_t$} &        \thead{$N_{\vec x}$} & \thead{$N = N_t N_{\vec x}$} &  \thead{its} &   \thead{time (s)} &  \thead{time/it (s)}&  \thead{CPU-hrs} \\
\midrule
\parbox[c]{1em}{\multirow{5}{*}{\rotatebox[origin=c]{90}{{\!\!single node}}}}
&       1 &        9 &  261\,121 &       2\,350\,089 &      8 &       33.36 &  4.17 &   0.01 \\
&       2 &       17 &  261\,121 &       4\,439\,057 &     11 &       46.66 &  4.24 &   0.03 \\
&       4 &       33 &  261\,121 &       8\,616\,993 &     12 &       54.60 &  4.55 &   0.06 \\
&       8 &       65 &  261\,121 &      16\,972\,865 &     13 &       65.52 &  5.04 &   0.15 \\
&      16 &      129 &  261\,121 &      33\,684\,609 &     13 &       86.94 &  6.69 &   0.39 \\\midrule \parbox[c]{1em}{\multirow{7}{*}{\rotatebox[origin=c]{90}{{\!\!multiple nodes}}}}
&      32 &      257 &  261\,121 &      67\,108\,097 &     14 &       93.56 &  6.68 &   0.83 \\
&      64 &      513 &  261\,121 &     133\,955\,073 &     14 &       94.45 &  6.75 &   1.68 \\
&     128 &     1\,025 &  261\,121 &     267\,649\,025 &     14 &       93.85 &  6.70 &   3.34 \\
&     256 &     2\,049 &  261\,121 &     535\,036\,929 &     15 &      101.81 &  6.79 &   7.24 \\
&     512 &     4\,097 &  261\,121 &  1\,069\,812\,737 &     15 &      101.71 &  6.78 &  14.47 \\
&  1\,024 &     8\,193 &  261\,121 &  2\,139\,364\,353 &     16 &      108.32 &  6.77 &  30.81 \\
&  2\,048 &  16\,385 &  261\,121 &  4\,278\,467\,585 &     16 &      109.59 &  6.85 &  62.34 \\
\bottomrule
\end{tabular}}
\vspace{0.5em}
\caption{Weak scaling results for the 2D problem.}
\label{tbl:2d-weak}
\end{table}
\begin{table}[p]
\centering
\scalebox{0.9}{%
\begin{tabular}{rrrrrrrr}
\toprule
 \thead{$P$} &     \thead{$N_t$} &        \thead{$N_{\vec x}$} & \thead{$N = N_t N_{\vec x}$} &  \thead{its} &   \thead{time (s)}  &\thead{time/it (s)}&  \thead{CPU-hrs} \\
\midrule
   1--64 &  16\,385 &  250\,047 &  4\,097\,020\,095 & \multicolumn{4}{c}{\Vhrulefill~out of memory~\Vhrulefill} \\
     128 &  16\,385 &  250\,047 &  4\,097\,020\,095 &     18 &     3\,308.49 &  174.13 &  117.64 \\
     256 &  16\,385 &  250\,047 &  4\,097\,020\,095 &     18 &     1\,655.92 &   87.15 &  117.75 \\
     512 &  16\,385 &  250\,047 &  4\,097\,020\,095 &     18 &        895.01 &   47.11 &  127.29 \\
  1\,024 &  16\,385 &  250\,047 &  4\,097\,020\,095 &     18 &        451.59 &   23.77 &  128.45 \\
  2\,048 &  16\,385 &  250\,047 &  4\,097\,020\,095 &     18 &        221.12 &   12.28 &  125.80 \\
\bottomrule
\end{tabular}}
\vspace{0.5em}
\caption{Strong scaling results for the 3D problem.}
\label{tbl:3d-strong}
\end{table}
\begin{table}[p]
\centering
\scalebox{0.9}{%
\begin{tabular}{rrrrrrrr}
\toprule
 \thead{$P$} &     \thead{$N_t$} &        \thead{$N_{\vec x}$} & \thead{$N = N_t N_{\vec x}$} &  \thead{its} &   \thead{time (s)}  &\thead{time/it (s)}&  \thead{CPU-hrs} \\
\midrule
    16 &     129 & 250\,047 &     32\,256\,063 & 15 & 183.65 & 12.24 &   0.82 \\
    32 &     257 & 250\,047 &     64\,262\,079 & 16 & 196.26 & 12.27 &   1.74 \\
    64 &     513 & 250\,047 &    128\,274\,111 & 16 & 197.55 & 12.35 &   3.51 \\
   128 &  1\,025 & 250\,047 &    256\,298\,175 & 17 & 210.21 & 12.37 &   7.47 \\
   256 &  2\,049 & 250\,047 &    512\,346\,303 & 17 & 209.56 & 12.33 &  14.90 \\
   512 &  4\,097 & 250\,047 & 1\,024\,442\,559 & 17 & 210.14 & 12.36 &  29.89 \\
1\,024 &  8\,193 & 250\,047 & 2\,048\,635\,071 & 18 & 221.77 & 12.32 &  63.08 \\
2\,048 & 16\,385 & 250\,047 & 4\,097\,020\,095 & 18 & 221.12 & 12.28 & 125.80 \\
\bottomrule
\end{tabular}}
\vspace{0.5em}
\caption{Weak scaling results for the 3D problem.}
\label{tbl:3d-weak}
\end{table}

Table~\ref{tbl:2d-weak} details the weak scaling results, i.e.~fixing the problem size per processor and increasing the number of processors. In perfect weak scaling, the time per iteration should remain constant.
We observe a slight increase in time per iteration on a single node, but when scaling to multiple nodes, we observe a near-perfect parallel efficiency of around 96.7\%, solving the final system with $4\,278\,467\,585$ degrees of freedom  in a mere 109 seconds.

\subsubsection{3D problem.} We select $\Omega := [0,1]^3$, and prescribe  the solution $u(t,x,y,z) := \exp(-3 \pi^2 t) \sin(\pi x) \sin(\pi y) \sin(\pi z)$, so the problem has vanishing forcing data \new{$g$}.

Table~\ref{tbl:3d-strong} shows the strong scaling results. We triangulate $I$ uniformly into $2^{14}$ time slabs, and $\Omega$ uniformly into $8^6$ tetrahedra. The arising system has $N = 4\,097\,020\,095$ unknowns, which we solve to tolerance in 18 iterations. The results are comparable to those in two dimensions, albeit a factor two slower at similar problem sizes.

Table~\ref{tbl:3d-weak} shows the weak scaling results for the 3D problem. As in the two-dimensional case, we observe excellent scaling properties, and see that the time per iteration is nearly constant.

\section{Conclusion}\label{sec:conclusion}
We have presented a framework for solving linear parabolic evolution equations  massively in parallel.
Based on earlier ideas \cite{Andreev2016,Neumuller2019,Stevenson2020}, we found a remarkably simple
symmetric Schur-complement equation. With a tensor-product discretization of the space-time cylinder
using standard finite elements in time and space together with a wavelet-in-time multigrid-in-space
preconditioner, \new{we were able to solve the arising systems to fixed accuracy} in a uniformly bounded number of \new{PCG} steps.

We found that our algorithm runs in linear complexity on a single
processor. Moreover, when \emph{sufficiently many} parallel processors are
available and communication is free, its runtime scales \emph{logarithmically}
in the discretization size.
These complexity results translate to a highly efficient algorithm in practice.

The numerical experiments serve as a showcase for the described space-time method, and exhibit its excellent time-parallelism by solving a linear system with over 4 billion unknowns in just 109 seconds, using just over 2 thousand parallel processors. By incorporating spatial parallelism as well, we expect these results to scale well to much larger problems.

Although performed in the rather
restrictive setting of the heat equation discretized using piecewise linear polynomials on
uniform triangulations, the parallel framework already allows solving more general linear parabolic
PDEs using polynomials of varying degree on locally refined (tensor-product) meshes.
In this more general setting, we envision load balancing to become the main hurdle in achieving good scaling results.

\subsubsection*{Acknowledgement.}
The authors would like to thank their advisor Rob Stevenson for the many fruitful discussions.

\subsubsection*{Funding.}
Both authors were supported by Netherlands Organization for Scientific Research (NWO) under contract no.~613.001.652. Computations were performed at the national supercomputer Cartesius under SURF code EINF-459.

%
%
\newcommand{\etalchar}[1]{$^{#1}$}

\end{document}